\newtheorem{theorem}{Theorem}[section]
\newtheorem{lemma}[theorem]{Lemma}
\newtheorem{definition}[theorem]{Definition}
\newtheorem{corollary}[theorem]{Corollary}
\newtheorem{proposition}[theorem]{Proposition}
\newtheorem{remark}[theorem]{Remark}
\newtheorem{example}[theorem]{Example}
\title[Fractional random walks]{An inverse problem for fractional random walks on finite graphs}
\keywords{fractional Laplacian, fractional gradient, Calderón problem, conductivity equation, graph theory, random walks, anomalous diffusion}
\subjclass[2020]{Primary 35R30; secondary 26A33, 42B37, 46F12}
\author{Giovanni Covi\textsuperscript{1}}
\author{Matti Lassas\textsuperscript{2}}
\thanks{\textsuperscript{1}\underline{Corresponding author}. Department of Mathematics and Statistics, University of Jyv\"askyl\"a, PO Box 35, 40014 Jyv\"askyl\"a, Finland. Email: giovanni.g.covi@jyu.fi}
\thanks{\textsuperscript{2}Department of Mathematics and Statistics, University of Helsinki, Pietari Kalmin katu 5 (Exactum), Helsinki, Finland. Email: matti.lassas@helsinki.fi}
\newcommand{\R}{{\mathbb R}}
\newcommand{\Z}{{\mathbb Z}}
\newcommand{\N}{{\mathbb N}}
\newcommand{\Q}{{\mathbb Q}}
\begin{document}

\setlength{\baselineskip}{24.09pt}

\maketitle
\begin{abstract}
We study an inverse problem on a finite connected graph $G=(X,E)$, on whose vertices a conductivity $\gamma$ is defined. Our data consists in a sequence of partial observations of a fractional random walk on $G$. The observations are partial in the sense that they are limited to a fixed, observable subset $B\subseteq X$, while the random walk is fractional in the sense that it allows long jumps with a probability $P$ decreasing as a fractional power of the distance along the graph. The transition probability $P$ also depends on $\gamma$. We show that this kind of random walk data allows for the determination of a gauge class to which the transition probability matrix $P$ belongs, which we discuss. Moreover, we show that if the transition probability matrix $P$ is itself known, then the amount of vertices $|X|$, the edge set $E$ and the conductivity $\gamma$ (up to a positive factor) can be recovered. We also show a characterization of the random walk data in terms of the corresponding transition matrices $P$, which highlights a new surprising nonlocal property. This work is motivated by the recent strong interest in the study of the fractional Calder\'on problem in the Riemannian setting. 
\end{abstract}


\section{Introduction}

In this paper we study a geometric inverse problem on a finite, connected graph $G=(X,E)$ of vertices $X$ and edges $E\subseteq [X]^2:= \{A\subseteq X : |A|=2\}$. Given a vertex $x_0\in X$, we consider a random walk $\{H^{x_0}_t\}_{t\in\N}$ on $G$ starting at $x_0$. This is a sequence of random variables $H^{x_0}_t:\Omega\rightarrow X$ defined on a probability space $\Omega$ and with values in the set of vertices $X$, indicating the position of a random walker at time $t\in\N$. Because we assume that the random walk starts at the vertex $x_0$, we let $H^{x_0}_0=x_0$. We also define a function $\gamma: X\rightarrow (0,\infty)$, which we call \emph{conductivity}, and make the assumption that the \emph{transition probability} $P(x,y)$ of the random walk $H^{x_0}_t$ between distinct, not necessarily adjacent vertices $x,y\in X$ depends on $\gamma$ in the following way:
$$ P(x,y):= \frac{1}{m(x)} \frac{\gamma^{1/2}(x)\gamma^{1/2}(y)}{d_G(x,y)^{\alpha}}. $$
Here $d_G$ indicates the distance between $x$ and $y$ along the graph, $m(x)$ is a normalization constant, and the power $\alpha>0$ is assumed to be fixed and known (see Remark \ref{significance}). We allow for measurements of the random walk only on a fixed set of vertices $B\subseteq X$, which we call \emph{observable}. This means that for every discrete time step $t\in\N$, we know either that the random walker is not in $B$, or we know at what vertex $x\in B$ it is found. The inverse problem we study consists in recovering from this information the total amount of vertices $|X|$, the edge set $E$, and the conductivity $\gamma$. \\

\begin{figure}[!h] \label{figure-intro}
\centering
\begin{tikzpicture}
\filldraw[blue] (0,0) circle (2pt);
    \node[shape=circle,draw=blue] (A) at (0,0) {};
\filldraw[blue] (0,2) circle (2pt);
    \node[shape=circle,draw=blue] (B) at (0,2) {};
\filldraw[blue] (2,3) circle (2pt);
    \node[shape=circle,draw=blue] (C) at (2,3) {};
    \node[shape=circle,draw=black] (D) at (1.3,1) {};
    \node[shape=circle,draw=black] (E) at (2,-1) {};
    \node[shape=circle,draw=black] (F) at (2.7,1) {} ;
    \node[shape=circle,draw=black] (G) at (4,0) {};
    \node[shape=circle,draw=black] (H) at (4,2) {};

    \path [-] (A) edge node[left] {} (B);
    \path [-](B) edge node[left] {} (C);
    \path [-](A) edge node[left] {} (D);
    \path [-](D) edge node[left] {} (C);
    \path [-](A) edge node[right] {} (E);
    \path [-](D) edge node[left] {} (E);
    \path [-](E) edge node[right] {} (F); 
    \path [-](E) edge node[right] {} (G);
    \path [-](F) edge node[left] {} (H);
\end{tikzpicture}
\caption{An example of graph as in our assumptions. The observable set $B$ is represented in blue. The total amount of vertices, as well as all the edges and the conductivity, are to be recovered from random walk data.}
\end{figure}

We were inspired by the recent work \cite{BILL21}, which considered an inverse problem on a graph in the context of the discrete heat equation. However, the operators considered there are local, and thus the techniques used in this article (which are adapted for nonlocal, fractional operators) differ substantially from the ones already present in the literature.

Our paper considers an inverse problem on a weighted graph, in which the edge structure and weights have to be recovered from random walk data. Inverse problems
for random walks have been object of study in \cite{50, 51, 82, 85}. Moreover, numerous applications in optical
tomography \cite{60, 61, 62, 63, 84}, network tomography \cite{64, 92}, electrical resistor networks
\cite{42, 71, 80} and neuroscience \cite{8} have been considered. In all these works, different types of random walk measurements are used in order to recover the transition matrix, but the edge set is always assumed to be known a priori. The works \cite{9, 79, 81} have studied the relation between edge structure and random walk properties, but the inverse problem of recovering the edge set itself has proved much harder to solve. Results have been obtained for trees, while more general types of graphs were considered only recently. For these kinds of results we refer to \cite{3}. Inverse problems on graphs are known to be connected to the classical Calder\'on problem, of which they represent a discretization. The works \cite{21, 38, 44, 46, 77} have studied electrical impedance tomography for resistor networks, which consists in recovering the edge structure and weights (conductivities) of a graph from electrical measurements performed at an accessible subset of vertices. The works in this field were focused either on determining the resistor values of known networks, or on recovering equivalence classes of networks corresponding to given boundary data \cite{39, 40, 45}.

The problem we study is fractional and nonlocal in nature. There has recently been very strong interest in the field of inverse problems for fractional operators, ever since the fractional Calder\'on problem was introduced in the seminal work \cite{GSU20}. Many different aspects involving uniqueness have been explored, among which low regularity \cite{RS20}, higher order \cite{CMRU22}, single measurement reconstruction \cite{GRSU20}, and monotonicity methods \cite{HL20, HL20a}. There has also been interest in other equations of fractional kind, as in \cite{RS20a,LLR20,CLR20,BGU21,L21,L23}. In particular, \cite{C20} and the subsequent works \cite{C20a,C22,CRZ22,CRTZ24} have studied (global) uniqueness and stability for the fractional conductivity equation, while \cite{CdHS22} has introduced an inverse problem for a related fractional elasticity operator. Besides uniqueness, the stability and instability properties of the fractional Calder\'on problem have also been object of intensive study \cite{RS18,R21,KRS21,BCR24}. Part of the interest in the fractional Calder\'on problem is due to its relation to its classical counterpart, which in its high dimensional, anisotropic version is one of the main and longest-standing open problems in the field. The recent works \cite{GU21, CGRU23} have presented a scheme following which uniqueness properties for the fractional Calder\'on problem can be deduced from the corresponding uniqueness results in the local case. On this topic we also refer to \cite{LLU23,Lin23}. 

The present article finds its natural place in the literature as a first step towards the goal of studying the inverse problem for the fractional conductivity equation (as introduced in \cite{C20}) in the Riemannian case. We will follow a similar homogenization technique as the one used in \cite{Val09,C20} for the definition of the fractional Laplacian and the conductivity operators in the Euclidean case. As discussed in our Section 2, these nonlocal operators can be obtained through the study of the continuous limit of a nonlocal random walk on a lattice. In a similar fashion, we plan to approximate a Riemannian manifold with a special graph, and obtain the Riemannian fractional conductivity operator as a continuous limit related to a nonlocal random walk on this graph. We discuss this plan in more detail in the final Section 8, and reserve the study of this procedure to future works. The article \cite{FGKU21} has also given rise to many recent efforts in the study of a version of the fractional Calder\'on problem in the Riemannian case. We must however highlight the fact that the operators considered in \cite{FGKU21} and the subsequent papers, as well as the framework from which they originate and the techniques employed in their study, differ substantially from the ones we treat here. While the ones in the referred articles are obtained spectrally, the fractional operators we are interested in are obtained from the nonlocal vector calculus of \cite{DGLZ12, DGLZ13}, as presented in Section 2.\\

We next present the main results of this paper. To this end, we start by defining what we mean by an \emph{admissible graph}.

\begin{definition}[Admissible graph]
    We say that a graph $G=(X,E)$ is \emph{admissible (with respect to an observable set $B\subseteq X$)} when it satisfies the two following conditions:
    \begin{enumerate}[label=(A\arabic*)]
        \item \label{adm-cond-1} The block $P_{12}$ of the transition matrix corresponding to a fractional conductivity random walk $H^{x_0}_t$ on $G$ has full column rank, and
        \item \label{adm-cond-2} the leaf set $L$ of $G$ is such that $|L|\geq 2$ and $\max_{\ell \in L}e(\ell) >3$.
    \end{enumerate}
\end{definition}

We refer to Sections \ref{sec:preliminaries} and \ref{sec:random-walk-data} for the precise definitions of the transition matrix $P$, the fractional conductivity random walk $H^{x_0}_t$, the leaf set $L$ and the eccentricity function $e$. The splitting of the transition matrix $P$, which produces the block $P_{12}$, is done with respect to the observable set $B$, and will be defined in equation \eqref{eq:P-splitting}. Already at this point we observe that a necessary condition for a graph $G$ to verify \ref{adm-cond-2} is that $|X|\geq 5$, since $G$ must have at least a minimal path of length $4$ between two leaves. The graph $G$ can also never be a \emph{star}, that is, there can not be a vertex $x\in X$ such that $y\sim x$ for all $y\in X\setminus\{x\}$, since in this case the eccentricity of every vertex would be at most $2$. Moreover, we notice that condition \ref{adm-cond-1} does not actually depend on the conductivity $\gamma$, and is therefore a property of the graph $G$ (see Remark \ref{remark-admissible-2}). We also observe that, while the admissibility assumption is sufficient for our arguments, it is yet unknown whether it is also necessary (see Section \ref{sec:open-problems}).

The random walk data corresponding to a given graph $G$ and conductivity $\gamma$ (and therefore to a given transition matrix $P$) is given in the form of a sequence of matrices $$\Lambda_\omega(P):=\{ f(P), f(P^2), f(P^3), ... \},$$
where the function $f$ maps a matrix belonging to $\R^{(N+M)\times(N+M)}$ to its upper left $\R^{N\times N}$ block. Here $N,M\in\N$ respectively represent the amounts of observable and unknown vertices. We also make use of a reduced form or random walk data: For all $K\in\N$, we let
$$\Lambda_K(P):=\{ f(P), f(P^2), ..., f(P^K)  \}.$$
The precise definitions of the random walk data will be given in Section 3. For now, it suffices to know that $\Lambda_\omega(P)$ corresponds to knowing the transition probabilities in any amount of jumps between any pair of observable vertices, while $\Lambda_K(P)$ indicated the same transition probabilities, but in at most $K$ jumps.

With the above definitions at hand, we can present the main result of the paper:

\begin{theorem}[Uniqueness and reconstruction]\label{th:main}
    Let $G=(X,E)$ be an admissible graph, $B\subseteq X$ the set of observable vertices, and $\gamma : X\rightarrow (0,\infty)$ an unknown conductivity. Consider the fractional conductivity random walk $H^{x_0}_t$ on $G$, and let $P$ be the corresponding transition matrix. Let the same assumptions hold for $\tilde G= (\tilde X, \tilde E)$, $\tilde B\subseteq \tilde X$, $\tilde \gamma$ and $\tilde P$, with $|B|=|\tilde B|$.
    
    If $\Lambda_3(P) = \Lambda_3(\tilde P)$ holds, then $|X|=|\tilde X|$, and the amount of vertices can be computed from the random walk data. Moreover, there exists a row-stochastic matrix $A\in GL_M$ such that $P=g(A,\tilde P)$, where $g$ is the group action $g: GL_M\times \R^{(N+M)\times (N+M)}\rightarrow \R^{(N+M)\times (N+M)}$ given by
$$ g(A,\tilde P):= (Id_N\oplus A) \tilde P (Id_N\oplus A^{-1} ). $$ 

If in particular $P=\tilde P$, then $E = \tilde E$, the conductivities $\gamma$ and $\tilde \gamma$ differ by a positive factor, and both $\tilde E$ and $\tilde \gamma$ (up to a positive factor) can be reconstructed from the random walk data.
\end{theorem}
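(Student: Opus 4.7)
The plan is to proceed in four stages matching the four conclusions of the theorem. First I would write $P$ in its $2\times 2$ block decomposition
\[ P=\begin{pmatrix}P_{11}&P_{12}\\ P_{21}&P_{22}\end{pmatrix} \]
with respect to the observable/unobservable partition and expand $f(P^k)$ for $k=1,2,3$. After cancelling redundant terms, the equalities $f(P^k)=f(\tilde P^k)$ reduce successively to
\[ P_{11}=\tilde P_{11},\qquad P_{12}P_{21}=\tilde P_{12}\tilde P_{21},\qquad P_{12}P_{22}P_{21}=\tilde P_{12}\tilde P_{22}\tilde P_{21}. \]
Next I would exploit detailed balance: the symmetry of $m(x)P(x,y)=\gamma^{1/2}(x)\gamma^{1/2}(y)d_G(x,y)^{-\alpha}$ gives $DP=P^{T}D$ with $D=\mathrm{diag}(m)$, so $P_{21}=D_U^{-1}P_{12}^{T}D_B$ and hence $\mathrm{rank}(P_{21})=\mathrm{rank}(P_{12})$. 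Admissibility \ref{adm-cond-1} forces both to equal $M$, so $P_{12}$ is injective and $P_{21}$ is surjective on $\R^M$, whence $\mathrm{rank}(P_{12}P_{21})=M$. Since $P_{12}P_{21}=f(P^2)-f(P)^2$ is read off from $\Lambda_2(P)$, the integer $M$ and thus $|X|=N+M$ are recovered from the data.

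For the orbit equivalence, the identity $P_{12}P_{21}=\tilde P_{12}\tilde P_{21}$ together with surjectivity of $P_{21}$ and of $\tilde P_{21}$ forces $\mathrm{im}(P_{12})=\mathrm{im}(\tilde P_{12})$, since both coincide with the column space of $P_{12}P_{21}$. As $P_{12}$ and $\tilde P_{12}$ are full column-rank $N\times M$ maps onto the same $M$-dimensional subspace of $\R^N$, there is a unique $A\in GL_M$ with $\tilde P_{12}=P_{12}A$. Substituting into the second block identity and using injectivity of $P_{12}$ yields $\tilde P_{21}=A^{-1}P_{21}$; substituting into the third identity and using injectivity of $P_{12}$ together with surjectivity of $P_{21}$ gives $\tilde P_{22}=A^{-1}P_{22}A$. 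These three relations are exactly $P=g(A,\tilde P)$.

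For the edge-set statement, restricting to $A=\mathrm{diag}(a_1,\dots,a_M)\in\mathrm{diag}(\R^M_+)$ turns the orbit identities into entry-wise rescalings $P_{22}(x,y)=a_x\tilde P_{22}(x,y)/a_y$, $P_{12}(x,y)=\tilde P_{12}(x,y)/a_y$, $P_{21}(x,y)=a_x\tilde P_{21}(x,y)$. I would then form the scaling-invariant products $P(x,y)P(y,x)$ and $P_{12}(x,y)P_{21}(y,x)$, which by the explicit formula reduce to the purely geometric ratios $\gamma(x)\gamma(y)/(m(x)m(y)d_G(x,y)^{2\alpha})$. Admissibility \ref{adm-cond-2} then enters: a leaf $\ell\in L$ with eccentricity exceeding $3$ serves as a probe whose row $P(\ell,\cdot)$, after normalization, is a pure power-law in the integer distance $d_G(\ell,\cdot)$; the non-integrality of $\alpha$ together with the integrality of $d_G$ make this pattern combinatorially rigid, so that matching the invariants on $G$ and $\tilde G$ forces $d_G=d_{\tilde G}$ everywhere, and hence $E=\tilde E$. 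Finally, when $\tilde P>0$ all off-diagonal entries are strictly positive, and combining the equality of distances from the previous step with the inversion of $m(x)P(x,y)=\gamma^{1/2}(x)\gamma^{1/2}(y)d_G(x,y)^{-\alpha}$ recovers $\gamma$ up to a single global positive factor, which is the usual Calder\'on gauge.

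The main obstacle will be the edge-set stage: extracting the integer-valued distance function $d_G$ from the entries of $P$ without direct access to $\gamma$ or $m$, and while still allowing an entire diagonal positive rescaling. The admissibility condition \ref{adm-cond-2} on leaves and their eccentricities is precisely the combinatorial input designed to break the residual diagonal scaling symmetry, but correctly exploiting the uniqueness of a leaf's single neighbour together with the rigidity of fractional power-law profiles is the delicate part of the argument, and is what forces the lower bound $|L|\geq 2$ and $\max_{\ell\in L} e(\ell)>3$.
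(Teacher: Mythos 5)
Your first two stages are correct and essentially reproduce the paper's proof of Theorem \ref{characterization-new}, in a slightly more direct form: where the paper runs the argument through a full-rank factorization $R_1R_2 = P_{12}P_{21}$ and Moore--Penrose inverses, you use injectivity of $P_{12}$ and surjectivity of $P_{21}$ to peel off the block relations $P_{12}=\tilde P_{12}A^{-1}$, $P_{21}=A\tilde P_{21}$, $P_{22}=A\tilde P_{22}A^{-1}$ directly; this is sound, and the recovery of $M$ as $\mathrm{rank}\bigl(f(P^2)-f(P)^2\bigr)$ matches the paper. The detailed-balance argument for $\mathrm{rank}(P_{21})=\mathrm{rank}(P_{12})$ is exactly the paper's Lemma 4.3.

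The gap is in your edge-set stage. The products $P(x,y)P(y,x)$ you propose as ``scaling-invariant'' still contain the unknown normalizations $m(x)m(y)$ and the unknown conductivities $\gamma(x)\gamma(y)$, so they cannot be compared between $G$ and $\tilde G$; a pairwise product only kills the diagonal gauge $A$, not the vertex-dependent factors intrinsic to the data. The paper instead forms a cross-ratio of \emph{four} entries, $F_{abc}(x) = \frac{f(a,x)f(b,c)}{f(b,x)f(a,c)} = q_{abc}\,d_G(b,x)/d_G(a,x)$ (after taking an $(n+2s)$-th root so the exponent disappears entirely), in which every $\sigma$-factor cancels and only integer distances survive up to one unknown constant; the quantity $R_{ab}=\max F_{abc}/\min F_{abc}$ is then genuinely data-determined. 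Your appeal to ``non-integrality of $\alpha$'' is also not available: $\alpha=n+2s$ is an integer when $s=1/2$, and the paper's rigidity comes from the arithmetic of the integer distances in $R_{ab}$ (extremizers of $F_{abc}$ are neighbours of $a$ or $b$; $R_{ab}\notin\N$ detects adjacent leaf--neighbour pairs; perfect-square tests separate $L$ from its neighbour set; a further extremizer construction using $\max_\ell e(\ell)>3$ orients each pair), not from the exponent. This multi-step combinatorial analysis is where both halves of condition \ref{adm-cond-2} are actually consumed, and it is the core of the reconstruction; your proposal names the right ingredients but does not supply the mechanism. Similarly, in the final stage the conclusion $\gamma=\tilde\gamma$ up to a factor when $\tilde P>0$ does not follow from ``inverting'' $m(x)P(x,y)$, since $m$ is unknown: the paper first uses Perron--Frobenius on the row-stochasticity constraint to force the diagonal gauge to be trivial, and then the transitivity of $P\oslash P^T$ to show $C=\lambda\tilde C$ for a scalar $\lambda$.
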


We deduce the proof of Theorem \ref{th:main} by means of the following procedure. First, we reconstruct the transition matrix from the raw random walk data, i.e. the transition probabilities in $1,2$ and $3$ steps between observable vertices. In this respect, we obtain the following characterization result (Section \ref{sec:data-char} clarifies the meaning of all the symbols involved in the statement):

\begin{theorem}[Characterization of the random walk data]\label{characterization-new}
Let $N,M,\tilde M\in \N$, and assume $P\in \R^{(N+M)\times(N+M)}$ and $\tilde P\in \R^{(N+\tilde M)\times(N+\tilde M)}$. If $\Lambda_3(P)=\Lambda_3(\tilde P)$, then there exist $r\in\N$ and two matrices $A\in \R^{r\times M}$ and $\tilde A \in \R^{r\times \tilde M}$ of full row rank such that 
 $$(Id_N\oplus A) P (Id_N\oplus A^+) = (Id_N\oplus \tilde A) \tilde P (Id_N\oplus \tilde A^+).$$ Moreover, if $P_{12}, \tilde P_{12}$ have full column rank and $P_{21}, \tilde P_{21}$ have full row rank, then the three following statements are equivalent:
\begin{enumerate}
    \item $\Lambda_3(P)=\Lambda_3(\tilde P)$,
    \item $\tilde M = M$, and there exists $A\in GL_M$ such that
$$ P = (Id_N\oplus  A) \tilde P (Id_N\oplus  A^{-1} ),$$
    \item $\Lambda_\omega(P)=\Lambda_\omega(\tilde P)$.
\end{enumerate} 
\end{theorem}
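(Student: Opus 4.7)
The natural first move is to unpack $\Lambda_3(P)=\Lambda_3(\tilde P)$ via block multiplication. Since $f(P)=P_{11}$, $f(P^2)=P_{11}^2+P_{12}P_{21}$ and $f(P^3)=P_{11}^3+P_{11}P_{12}P_{21}+P_{12}P_{21}P_{11}+P_{12}P_{22}P_{21}$, one extracts successively the three identities (I) $P_{11}=\tilde P_{11}$, (II) $P_{12}P_{21}=\tilde P_{12}\tilde P_{21}$, and (III) $P_{12}P_{22}P_{21}=\tilde P_{12}\tilde P_{22}\tilde P_{21}$.

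For the three-way equivalence, $(3)\Rightarrow(1)$ is immediate from $\Lambda_3\subseteq\Lambda_\omega$, and $(2)\Rightarrow(3)$ follows from the invariance of the top-left block under conjugation by $Id_N\oplus A$ applied to each $\tilde P^k$. The heart is $(1)\Rightarrow(2)$: the full rank hypotheses give $\mathrm{rank}(P_{12}P_{21})=M$ and $\mathrm{rank}(\tilde P_{12}\tilde P_{21})=\tilde M$, so (II) forces $M=\tilde M$. Taking $A:=P_{12}^+\tilde P_{12}$ with the Moore--Penrose pseudoinverse, the identity $\mathrm{Im}(P_{12})=\mathrm{Im}(\tilde P_{12})=\mathrm{Im}(P_{12}P_{21})$ (all of dimension $M$) shows $A\in GL_M$ with $A^{-1}=\tilde P_{12}^+P_{12}$. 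The block identities $P_{12}=\tilde P_{12}A^{-1}$ and $P_{21}=A\tilde P_{21}$ then follow directly from the definition of $A$, while $P_{22}=A\tilde P_{22}A^{-1}$ follows from (III) by cancelling $\tilde P_{12}^+$ on the left and $\tilde P_{21}^+$ on the right, both available thanks to the rank hypotheses.

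The first assertion is more subtle, since no side is invertible and the reduced dimension $r$ must be extracted from the data. The natural candidate is $r:=\mathrm{rank}(P_{12}P_{21})$; one then seeks $A\in\R^{r\times M}$ and $\tilde A\in\R^{r\times\tilde M}$ of full row rank jointly satisfying $AP_{21}=\tilde A\tilde P_{21}$, $P_{12}A^+=\tilde P_{12}\tilde A^+$ and $AP_{22}A^+=\tilde A\tilde P_{22}\tilde A^+$. My strategy would be to parameterize the admissible pairs $(A,\tilde A)$ via a rank factorization $P_{12}P_{21}=BC$ and to use the remaining freedom to enforce all three matchings at once. The main obstacle is precisely the third one: the $(2,2)$ blocks $AP_{22}A^+$ and $\tilde A\tilde P_{22}\tilde A^+$ depend delicately on the individual choices of $A$ and $\tilde A$, not merely on the products $AP_{21}$ and $\tilde A\tilde P_{21}$, so one must show that the residual freedom in the first two matchings is exactly what is needed for identity (III) to force the third.
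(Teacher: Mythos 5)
Your treatment of the equivalence $(1)\Leftrightarrow(2)\Leftrightarrow(3)$ is correct, and it takes a mildly different route from the paper: you build the gauge matrix directly as $A:=P_{12}^+\tilde P_{12}$, using $\mathrm{Im}(P_{12})=\mathrm{Im}(P_{12}P_{21})=\mathrm{Im}(\tilde P_{12}\tilde P_{21})=\mathrm{Im}(\tilde P_{12})$ to see it is invertible, whereas the paper obtains the invertible $A$ by specializing its general rank-$r$ construction to the case $r=M=\tilde M$ (two full rank factorizations of the same matrix differ by an invertible factor). Both work; yours is slightly more self-contained for this half, the paper's has the advantage of making the equivalence a corollary of the first assertion.

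The genuine gap is in the first assertion, which you explicitly leave unresolved: you pose the three matchings $AP_{21}=\tilde A\tilde P_{21}$, $P_{12}A^+=\tilde P_{12}\tilde A^+$, $AP_{22}A^+=\tilde A\tilde P_{22}\tilde A^+$ and declare the third to be "the main obstacle," hoping that "residual freedom" in the first two can be exploited. No such delicate balancing is needed, and your plan as stated does not constitute a proof. The resolution is to take a \emph{single} full rank factorization $R_1R_2=P_{12}P_{21}=\tilde P_{12}\tilde P_{21}$ of the \emph{common} product (here identity (II) is used once and for all) and to define both gauges from it: $A:=R_1^+P_{12}$ and $\tilde A:=R_1^+\tilde P_{12}$. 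One checks $R_2=R_1^+R_1R_2=AP_{21}$, which forces $\mathrm{rank}(A)=r$, and likewise $R_2=\tilde A\tilde P_{21}$; moreover $A^+=P_{21}R_2^+$ and $\tilde A^+=\tilde P_{21}R_2^+$, whence $P_{12}A^+=R_1=\tilde P_{12}\tilde A^+$. The first two matchings therefore hold because both sides equal $R_2$ and $R_1$ respectively, and the third follows in one line from identity (III):
$$ AP_{22}A^+=R_1^+\left(P_{12}P_{22}P_{21}\right)R_2^+=R_1^+\left(\tilde P_{12}\tilde P_{22}\tilde P_{21}\right)R_2^+=\tilde A\tilde P_{22}\tilde A^+. $$
In other words, the $(2,2)$-block identity is not an obstacle at all once $A$ and $\tilde A$ are anchored to the same factorization: all three matchings are consequences of the single choice of $(R_1,R_2)$, and the "freedom" you were planning to exploit never enters.
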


We observe that the random walk data is not itself sufficient in order to determine a unique transition matrix: One rather obtains that there exists a whole gauge class of matrices which verify the random walk data in the form given in the above Theorem \ref{characterization-new}. Section \ref{sec:gauge} is devoted to the study of the gauge class. The most surprising implication of the characterization Theorem \ref{characterization-new} is that random walk data in terms of transition probabilities between observable points for $k>3$ is \emph{redundant}, i.e. it does not add new information useful for the reconstruction. We have also shown that the random walk data considered in the characterization Theorem \ref{characterization-new} is all \emph{necessary} for the reconstruction, thus fixing the amount of interesting steps to observe to exactly $3$. We give an intuitive interpretation of this result in Remark \ref{rmk:intuition}. \\

The second step in the proof of our main Theorem \ref{th:main} consists in recovering the edge set $E$ and conductivity $\gamma$ from the knowledge of the transition matrix $P$. This recovery is based on a delicate comparison of the transition probabilities between adjacent vertices, particularly the leaves of the graph and their neighbours. After having identified the leaves (both observable and not), we use this knowledge to determine the entire structure of the edge set. It is easy then to recover the conductivity from this and the transition probabilities. \\

The remaining part of the article is organized as follows. Section 2 contains the preliminaries about linear algebra, random walks, and fractional operators which will be necessary in the paper. The random walk data is described in Section 3. Section 4 contains the proof of the characterization Theorem \ref{characterization-new}, which states that the random walk data determines a gauge class of matrices to which $P$ belongs.
The resulting gauge class is described in detail in Section 5. Sections 6 and 7 are dedicated to the solution of the inverse problem, i.e. the proof of the main Theorem \ref{th:main}. Finally, we have collected some open questions and conjectures related to our study in the final Section 8.

\section{Preliminaries}\label{sec:preliminaries}

We dedicate this section to the presentation of some concepts from linear algebra, graph theory, and nonlocal vector calculus, which we use in the coming arguments.

\subsection{Linear algebra}

Here we define the concepts from linear algebra used later. We start from the \emph{Moore-Penrose inverse} of a matrix $A$, which is a generalization of the concept of inverse matrix (see \cite{Pen55}):

\begin{definition}[Moore-Penrose inverse] Let $m,n\in\N$ and $A\in \R^{m\times n}$. The \emph{Moore-Penrose inverse} (or \emph{pseudoinverse}) of $A$ is the unique matrix $A^+\in\R^{n\times m}$ such that the four following conditions hold:
    $$AA^+A=A, \qquad A^+AA^+=A^+, \qquad (AA^+)^T=AA^+, \qquad (A^+A)^T=A^+A.$$
If $A$ has full rank, that is rank$(A)=\min\{m,n\}$, then either $AA^T$ or $A^TA$ is invertible, and
$$A^+=\begin{cases}
    A^T(AA^T)^{-1} , \quad\mbox{ if } m\leq n  \\ (A^TA)^{-1}A^T, \quad\mbox{ if } n\leq m
\end{cases}.$$
If $m\leq n$ (resp. $n\leq m$), then $A^+$ is a right (resp. left) inverse of $A$. If $m=n$, then $A$ is invertible and $A^+=A^{-1}$.
\end{definition} 

It is possible to obtain the Moore-Penrose inverse of a matrix $A$ via the so called \emph{full rank factorization} (see \cite[Theorems 1 and 2]{PO99}), which we define next.

\begin{definition}[Full rank factorization] Let $m,n,r\in\N$, and assume that $A\in \R^{m\times n}, F\in \R^{m\times r}$ and $G\in \R^{r\times n}$ are such that 
$$ \mbox{rank}(A)=\mbox{rank}(F)=\mbox{rank}(G)=r>0, \qquad A=FG.$$
Then we say that $F$ and $G$ give a \emph{full rank factorization} of $A$. 

Moreover, $F,G$ and $F_1,G_1$ give two full rank factorizations of the same matrix $A$ if and only if there exists an invertible matrix $R\in \R^{r\times r}$ such that $F_1 = FR$ and $G_1 = R^{-1}G$.
\end{definition} 

It should be noted that a full rank factorization always exists for any matrix of positive rank, as proved for example in \cite[Section 3]{PO99}. We also present the concept of \emph{transitive matrix}, as it appears in \cite{FRS99}:

\begin{definition}[Transitive matrix]
    Let $n\in \N$ and $A\in \R^{n\times n}$. We say that $A$ is \emph{transitive} if $A_{ik}A_{kj}=A_{ij}$ for all $i,j,k\in\{1,...,n\}$.
\end{definition}

\subsection{Random walks on graphs} Next, we present the main ideas which we need from graph theory. Let $G=(X,E)$ be a finite, connected, undirected graph whose vertices belong to $X$ and whose edges belong to $E\subseteq [X]^2:= \{A\subseteq X : |A|=2\}$. When $x,y\in X$ share an edge, that is when $\{x,y\}\in E$, we say $x\sim y$. We define the \emph{leaf set} $L\subseteq X$ of $X$ as $$L:= \{x\in X: \exists ! y\in X : y\sim x\},$$
that is the set of all vertices with exactly one neighbour. Observe that there exist graphs, such as any totally connected graph with at least $3$ vertices, for which $L=\emptyset$. \\

Since $G$ is connected, for all pairs $x,y\in X$ with $x\neq y$ there exists at least one finite sequence of vertices $\{x_j\}_{j=0}^n$ such that $$ x_0=x, \qquad x_n=y, \qquad x_{j-1}\sim x_{j} \quad \mbox{for all } j\in\{1,...,n\}.  $$ This sequence is called a \emph{path from $x$ to $y$}, and its \emph{length} is the natural number $n$. With this in mind, we can define the \emph{distance} $d_G(x,y)$ of two vertices $x,y\in X$ as $0$ when $x=y$, and otherwise as the length of the shortest path from $x$ to $y$. We also define the function $e: X\rightarrow \N$ given by $$ e(x) := \max_{y\in X} d_G(x,y),$$ which is called \emph{eccentricity}. It associates to each point $x\in X$ the distance to the point $y\in X$ which is furthest from $x$. \\

Next, we will consider a discrete-time random walk on the graph $G$. Given $x_0\in X$, we let $H^{x_0}_t, t\in\N$ be a discrete-time Markov chain with starting point $x_0$ and state space $X$. We will refer to $H^{x_0}_t$ as the \emph{random walk}. For any $x,y\in X$, we define the \emph{transition probability} $P(x,y)$ from $x$ to $y$ as 
$$P(x,y):= \mathbb P(H^{x_0}_{t+1}=y|H^{x_0}_t=x),$$
which is assumed to be independent of $t$. When seen as a matrix whose rows and columns are indexed in the set $X$, $P$ constitutes the \emph{transition matrix} of the Markov chain $H^{x_0}_t$. We remark that the transition matrix $P$ is row-stochastic, but it needs not to be symmetric or column-stochastic. Recall that a square matrix $A\in\R^{n\times n}$ is said to be \emph{row-stochastic} when all its rows sum to $1$, that is, when the vector $\mathbf 1\in \R^{n}$ composed of all $1$'s is a right-eigenvector of $A$ corresponding to the eigenvalue $1$ (see also the subsection Notation):
$$ A\mathbf 1 = \mathbf 1.$$
\noindent In fact, even if the underlying mechanism that produces the random walk is symmetric with respect to the vertices $x$ and $y$ (as it will in fact be the case for our fractional conductivity), this symmetry is lost at the moment of the row-wise normalization of the probabilities.
Since $P$ is the transition matrix of a Markov chain, it is easy to show by induction that it verifies the following Markov chain property: for all $k\in\N$ and $x,y\in X$, the transition probability from $x$ to $y$ in $k$ steps is given by the $(x,y)$ element of the $k$-th power of $P$, that is
$$P^k(x,y)= \mathbb P(H^{x_0}_{t+k}=y|H^{x_0}_t=x).$$
We say that a random walk $H^{x_0}_t$ is \emph{allowed to stay at a vertex} $x\in X$ if the transition probability $P(x,x)$ does not vanish, that is if the probability to jump back to the same vertex $x$ in one step is not $0$. Similarly, we simply say that a random walk $H^{x_0}_t$ is \emph{allowed to stay} if $P(x,x)\neq 0$ for all $x\in X$. 

\subsection{Fractional conductivity}
We shall now consider a specific kind of random walk on a graph $G$ arising from the study of fractional conductivity, following \cite{C20}. Let $n\in\N$ and $s\in(0,1)$. Consider a scalar function $\gamma\in C^\infty(\R^n)$, which we call \emph{conductivity}. The \emph{fractional conductivity operator} $\mathbf C^s_\gamma$ is weakly defined as
$$ \langle\mathbf C^s_\gamma u,v\rangle :=\langle \gamma^{1/2}(x)\gamma^{1/2}(y)\nabla^s u,\nabla^s v \rangle, $$
where $u,v\in H^s(\R^n)$ are Sobolev functions. The \emph{fractional gradient} $\nabla^s$ is defined at first for $u\in C^\infty_c(\R^n)$ as $$\nabla^su(x,y) := \frac{C_{n,s}^{1/2}}{\sqrt 2} \frac{u(y)-u(x)}{|x-y|^{n/2+s+1}}(x-y), \qquad\mbox{ where } \quad C_{n,s}:= \frac{4^s\Gamma(n/2+s)}{\pi^{n/2}|\Gamma(-s)|},$$
and then extended to act on $H^s(\R^n)$ by density. Observe that the fractional gradient is a 2-points function, in that it maps $\nabla^s : H^s(\mathbb R^n)\rightarrow L^2(\mathbb R^{2n})$. The fractional conductivity operator was introduced in \cite{C20} based on the nonlocal vector calculus studied in \cite{DGLZ12, DGLZ13}, as a generalization of the fractional Laplacian operator $(-\Delta)^s$. For a smooth, compactly supported function $u$, the two operators can be written as $$(-\Delta)^su(x):= C_{n,s}\int_{\R^n}\frac{u(x)-u(y)}{|x-y|^{n+2s}}dy, \qquad  \mathbf C^s_\gamma u(x) = C_{n,s}\int_{\R^n} \gamma^{1/2}(x)\gamma^{1/2}(y)\frac{u(x)-u(y)}{|x-y|^{n+2s}}dy.$$ Observe that the constant $C_{n,s}$ verifies the property $$ \lim_{s\rightarrow 1^-}\frac{C_{n,s}}{s(s-1)} = \frac{4n}{\omega_{n-1}},$$
which ensures that $\lim_{s\rightarrow 1^-}(-\Delta)^su = -\Delta u$. If we define the \emph{fractional divergence} $(\nabla\cdot)^s:=(\nabla^s)^*$ as the adjoint of the fractional gradient, we can prove the property $(\nabla\cdot)^s\nabla^s = (-\Delta)^s$, which is familiar from the classical, non-fractional case.

The main result of \cite{C20} discusses an inverse problem in $\mathbb R^n$, namely whether the conductivity function $\gamma$ can be uniquely determined from nonlocal Dirichlet-to-Neumann data $\Lambda_\gamma$ taken in the exterior of an open, bounded set $\Omega$ in which the fractional conductivity equation $\mathbf C^s_\gamma u=0$ is assumed to hold. The cited paper indeed shows that uniqueness in the recovery of $\gamma$ from $\Lambda_\gamma$ holds, as an effect of the unique continuation property of the fractional Laplacian proved in \cite{GSU20}, and of a reduction argument for the fractional conductivity equation reminiscent of the classical one by Liouville, which holds in the non-fractional case. It was also shown in \cite[Section 5]{C20} that the fractional conductivity operator naturally arises as a continuous limit of a discrete random walk with weights and long jumps (see also \cite{Val09} for the same argument for the fractional Laplacian operator). Fix $h>0$, and consider a random walk on the graph $(h\Z^n,E)$ subject to time increments in $h^{2s}\N$. Here we let $x\sim y$ for $x,y\in h\Z^n$ if and only if $x-y=he_j$ for some $j\in\{1,...,n\}$, where $e_j$ is the vector whose $j$-th coordinate is $1$ and the others are $0$. Given $x,y\in h\Z^n$, the transition probability from $x$ to $y$ is obtained by normalizing
$$C(x,y):=\begin{cases}
    \gamma(x)c(x)  &, x=y \\
    h^{n+2s}\frac{\gamma^{1/2}(x)\gamma^{1/2}(y)}{|x-y|^{n+2s}}  &, x\neq y
\end{cases}$$
with respect to $x$, that is
$$ P(x,y):= \frac{C(x,y)}{m(x)}, \qquad m(x):= \sum_{y\in h\Z^n}C(x,y). $$
Here $P(x,x)=\frac{\gamma(x)}c(x){m(x)}$ represents the (possibly non-zero) probability that the random walk stays at $x$ after one time step. If $u(x,t)$ is the probability that the random walk is at $x\in h\Z^n$ at time $t\in h^{2s}\Z$, then we have
\begin{align*}
    m(x)\partial_tu(x,t)=\lim_{h\rightarrow 0}h^{n}\sum_{y\in h\Z^n\setminus\{x\}}\gamma^{1/2}(x)\gamma^{1/2}(y)\frac{u(x,t)- u(y,t)}{|x-y|^{n+2s}}\approx \mathbf C^s_\gamma u(x,t),
\end{align*}
from which we see that the stationary state must solve $\mathbf C^s_\gamma u(x)=0$.\\

Let us now consider a graph $G=(X,E)$ and a conductivity $\gamma: X\rightarrow (0,\infty)$ defined on the vertices of the graph. In analogy to the previous case, we let
$$ C(x,y):=\begin{cases}
    \gamma(x)c(x), & x=y \\
    \frac{\gamma^{1/2}(x)\gamma^{1/2}(y)}{d_G(x,y)^{n+2s}},  & x\neq y
\end{cases} , \qquad m(x):= \sum_{y\in X}C(x,y), \qquad P(x,y):= \frac{C(x,y)}{m(x)}. $$
We remark that for the graph $(h\Z^n,E)$ we did not use the distance along the graph explicitly, but we would have obtained the same limit result due to the equivalence of all norms in finite-dimensional vector spaces. The \emph{graph fractional conductivity operator} is then defined as
$$ \mathbf C^s_{\gamma,G} u(x) = C_{n,s} \sum_{y\in X\setminus\{x\}} \gamma^{1/2}(x)\gamma^{1/2}(y)\frac{u(x)- u(y)}{d_G(x,y)^{n+2s}}, $$
and in particular the \emph{graph fractional Laplacian} is
$$ (-\Delta)^s_{G} u(x) = C_{n,s} \sum_{y\in X\setminus\{x\}} \frac{u(x)- u(y)}{d_G(x,y)^{n+2s}}. $$
We see that these are not examples of the graph Laplacian, due to the fact that our random walks allow long jumps. Recall that, given two fixed functions $\mu: X\rightarrow \R$ and $g:X^2\rightarrow \R$ with $g$ symmetric, the \emph{graph Laplacian} is defined as  $$\Delta_{\mu,g,G}u(x) = \mu(x)^{-1}\sum_{y\sim x} g(x,y)(u(x)-u(y)).$$ 
However, if we define the new completely connected graph $G'$ on the same vertices $X$, we see that $ \mathbf C^s_{\gamma,G}= C_{n,s}\Delta_{1, \frac{\gamma^{1/2}(x)\gamma^{1/2}(y)}{d_G(x,y)^{n+2s}} ,G'} $, where the distance function is crucially still taken with respect to the original graph $G$.

\subsection{Notation}
We conclude the preliminaries with a short clarification of our notation. 

Let $X,Y$ be \emph{finite} sets. After having fixed a particular order for the elements of $X$, it is possible to understand any function $f: X\rightarrow \R$ as a vector in $\R^{|X|}$, indexed by the elements of $X$. Analogously, after having fixed an order for the elements of $Y$, one can understand any function $F:X\times Y\rightarrow \R$ as a matrix belonging to $\R^{|X|\times |Y|}$. We use this identification throughout the paper. In particular, the conductivity $\gamma$ is seen as a vector, and the transition matrix $P$ is also understood as a function of two variables.  

We further define the symbols we use for matrix operations. Let $n,m,\nu,\mu \in\N$, and consider the matrices $A\in R^{n\times m}$, $B\in \R^{\nu\times\mu}$. We indicate by $A\oplus B$ the matrix in $\R^{(n+\nu)\times (m+\mu)}$ defined block-wise as
$$A\oplus B = \left(\begin{array}{cc}
    A & 0 \\
    0 & B
\end{array}\right).$$
If $A,B\in \R^{n\times m}$, the Hadamard product $A\odot B \in \R^{n\times m}$ is given by $$ (A\odot B)_{ij} = A_{ij}B_{ij}, \qquad \mbox{for all } i\in\{1,...,n\}, j\in\{1,...,m\}.$$
In the assumption that all the entries of $B$ are different from $0$, we also define the Hadamard quotient $A\oslash B \in \R^{n\times m}$ as $$ (A\oslash B)_{ij} = A_{ij}/B_{ij}, \qquad \mbox{for all } i\in\{1,...,n\}, j\in\{1,...,m\}.$$
Moreover, we define the function diag $ : \R^n \rightarrow \R^{n\times n}$ which associates to any vector $v\in \R^{n}$ the square matrix diag$(v)$ having $v$ as principal diagonal and $0$ for all the non-diagonal entries.

We indicate by $Id_n$ the identity matrix in $\R^{n\times n}$ (we may omit the suffix $n$ when the dimension is clear from the context). We also let $\mathbf 1$, $\mathbf 0$ be the vectors (or more in general the matrices) whose all entries are respectively $1$ and $0$. When necessary, we will indicate the dimensions of these objects with suffixes. 

\section{ Partial random walk data }\label{sec:random-walk-data}
In this section we will write the random walk data in a more manageable, algebraic form. This will then be used in the coming sections, where the set of all possible transition matrices which agree with the random walk data will be characterized. \\

Let $B\subseteq X$ be the set of observable vertices, and define $N:=|B|$, $M:=|X\setminus B|$. Let us fix an order for the elements of $X$ in such way that the first $N$ of them are the observable vertices, i.e. the elements of $B$. In order to separate the transition information between the observable vertices belonging to $B$ from the transition information regarding the unobservable vertices belonging to $X\setminus B$, we introduce the following block-splitting of the transition matrix $P\in \R^{(N+M)\times (N+M)}$:
\begin{equation}\label{eq:P-splitting} P=\left(\begin{array}{cc}
    P_{11} & P_{12} \\
    P_{21} & P_{22}
\end{array}\right), \end{equation}
where $P_{11} \in \R^{N\times N}$, $P_{12}\in \R^{N\times M}$, $P_{21}\in \R^{M\times N}$, and $P_{22}\in\R^{M\times M}$. It is now clear that the sub-matrix $P_{11}$ contains the transition probabilities between elements of $B$. We observe at this point that there are of course many different ways of ordering the unobservable vertices from $X\setminus B$. This gives rise to a finite family of equivalent representations for the transition probability $P$: if $\Pi\in \R^{M\times M}$ is any permutation matrix, then $P$ may be equivalently represented as
$$ (Id_N\oplus \Pi)P(Id_N\oplus \Pi^T) =\left(\begin{array}{cc}
    P_{11} & P_{12}\Pi^T \\
    \Pi P_{21} & \Pi P_{22} \Pi^T
\end{array}\right). $$\\
Thus the transition matrices $P$ would more accurately be represented in our case by classes in $\R^{(N+M)\times (N+M)}$, each one determined by applying the above permutations to a given representative. However, this understanding does not substantially improve our analysis of the problem, and therefore we will not mention it further.\\

If we similarly split the matrix $P^k$, where $k\in\N$, by the Markov chain property we obtain that the sub-matrix $(P^k)_{11}\in \R^{N\times N}$ contains the transition probabilities in $k$ steps between pairs of observable vertices in $B$. In general, $(P^k)_{11} \neq (P_{11})^k$.  By block-wise matrix multiplication, we rather have 
   $$ P^2 = \left(\begin{array}{cc}
    (P_{11})^2 + P_{12}P_{21} & P_{11}P_{12} + P_{12}P_{22} \\
    P_{21}P_{11}+P_{22}P_{21} & P_{21}P_{12} + (P_{22})^2 
\end{array}\right), $$
and so
\begin{equation}\label{eq:formula-P2}
    (P^2)_{11} =  (P_{11})^2 + P_{12}P_{21},
\end{equation}
\begin{equation}\label{eq:formula-P3}
(P^3)_{11} = (P_{11})^3 + P_{11}P_{12}P_{21} + P_{12}P_{21}P_{11}+P_{12}P_{22}P_{21}. 
\end{equation}

Let us now consider the partial random walk data in terms of the block matrices we just defined. 
Since we allow for measurements of the random walk on $B$, then for every discrete time step $t\in\N$, we know either that the random walker is not in $B$, or we know at what vertex $x\in B$ it is found. By observing the random walk for $t\rightarrow\infty$, we can compute the probabilities of all events of the kind
$$\mathbb P(H^{x_0}_{t+k}=y|H^{x_0}_t=x)=P^k(x,y),$$ where $x,y\in B$ and $k\in\N$. This amounts to knowing the sub-matrices $(P^k)_{11}$ for all $k\in\N$. From now on, we will therefore assume that the partial random walk data of the inverse problem is given in the form of the following sequence of matrices 
$$ \Lambda_{\omega}(P):=\{ (P^k)_{11} \}_{k\in\N} \subset \R^{N\times N}. $$
For all $K\in\N$ we also define the following reduced version of the partial random walk data:
$$ \Lambda_{K}(P):=\{ (P^k)_{11} \}_{k\in\{1,...,K\}} \subset \R^{N\times N}, $$
where it is assumed that only the transition probabilities between observable vertices in at most $K$ steps are known. \\

In the next section we will consider the problem of recovering the transition matrix $P$ from partial data of the form $\Lambda_{\omega}(P)$. We will observe that the reduced data $\Lambda_{3}(P)$ suffices to prove uniqueness up to a natural gauge, and that all the rest of the data is redundant.

\section{Recovering a transition matrix from partially observable data}\label{sec:data-char}

In this section we want to answer a fundamental question about random walks on graphs, that is, whether the transition matrix $P$ relative to a random walk on a given graph $G=(X,E)$ can be uniquely recovered from transition data relative to a subset of observable vertices of $G$. As a first step in this direction, we prove Theorem \ref{characterization-new}, using an argument inspired by \cite[Theorem 2]{PO99}. It should however be noted that, while Theorem \ref{characterization-new} completely characterizes the set of matrices $P \in R^{(N+M)\times (N+M)}$ such that $\Lambda_{\omega}(P) = \Lambda_{\omega}(\tilde P)$ (or equivalently $\Lambda_{3}(P) = \Lambda_{3}(\tilde P)$), it says nothing about whether $P$ is a probability matrix, or whether $P$ arises as the transition matrix of a fractional random walk on a graph. We will consider these questions in the coming sections.

\begin{proof}[Proof of Theorem \ref{characterization-new}]
By equation \eqref{eq:formula-P2} we deduce $$ P_{12}P_{21} = (P^2)_{11}-(P_{11})^2, $$
and therefore from \eqref{eq:formula-P3} we have
\begin{align*}
    P_{12}P_{22}P_{21} & = (P^3)_{11} - (P_{11})^3 - P_{11}P_{12}P_{21} - P_{12}P_{21}P_{11} 
    \\ & = 
    (P^3)_{11} - (P_{11})^3 - P_{11}((P^2)_{11}-(P_{11})^2) - ((P^2)_{11}-(P_{11})^2)P_{11}
     \\ & = 
    (P^3)_{11} - P_{11}(P^2)_{11} - (P^2)_{11}P_{11}+(P_{11})^3.
\end{align*} 
Using the assumption $\Lambda_3(P)=\Lambda_3(\tilde P)$ we obtain the following identities of matrices in $\R^{N\times N}$:
$$ P_{12}P_{21} =  \tilde P_{12}\tilde P_{21}, \qquad  P_{12}P_{22}P_{21} =  \tilde P_{12}\tilde P_{22}\tilde P_{21}.  $$

Let $r:= \mbox{rank}( P_{12}P_{21} )$ (observe that $r\leq N$), and consider a full-rank decomposition $R_1R_2 = P_{12}P_{21}$ of the matrix $P_{12}P_{21}$, where $R_1\in \R^{N\times r}$ and $R_2\in \R^{r\times N}$ have full rank. We have  $$R_1^+R_1 = Id_r = R_2R_2^+.$$ Define the matrix $A:= R_1^+P_{12}\in\R^{r\times M}$, which has full row rank since $R_2 =R_1^+R_1R_2 = R_1^+P_{12}P_{21} = AP_{21}$ and thus
$$ r= \mbox{rank}(R_2) = \mbox{rank}(AP_{21}) \leq \mbox{rank}(A) \leq r. $$
Therefore, $A$ admits a (unique) right-inverse $A^+$, and since it holds $Id_r = R_2R_2^+ = AP_{21}R_2^+$ it must be $A^+ = P_{21}R_2^+$. This gives also $R_1 = P_{12}A^+$. We can argue similarly for $\tilde A:= R_1^+\tilde P_{12}\in \R^{r\times \tilde M}$, thus obtaining $R_2 = \tilde A\tilde P_{21}$ and $R_1 = \tilde P_{12}\tilde A^+$. Moreover, we can compute
$$  AP_{22}A^+= R_1^+ P_{12}P_{22}P_{21} R_2^+ =  R_1^+ \tilde P_{12}\tilde P_{22}\tilde P_{21} R_2^+ = \tilde A\tilde P_{22}\tilde A^+. $$
It is now a simple computation to verify that
$$(Id_N\oplus A) P (Id_N\oplus A^+) = (Id_N\oplus \tilde A) \tilde P (Id_N\oplus \tilde A^+),$$
which proves the first part of the statement.

Assume now that $P_{12}, \tilde P_{12}$ have full column rank and $P_{21}, \tilde P_{21}$ have full row rank. We see that $ P_{12}P_{21}$ and $\tilde P_{12}\tilde P_{21}$ constitute two full-rank decompositions of the same matrix, which implies $M=r = \tilde M$. Thus $A$ is a square, full-rank matrix, i.e. it is invertible and $A^+ = A^{-1}$. The same holds true for $\tilde A$. In light of the formula $Id_N\oplus A^{-1} = (Id_N\oplus A)^{-1}$, we conclude   $$ P = (Id_N\oplus \mathcal A) \tilde P (Id_N\oplus \mathcal A^{-1} ),$$
where $\mathcal A := A^{-1}\tilde A$.  This gives the implication $(1)\Rightarrow (2)$. Since $(3)\Rightarrow (1)$ is trivial, we only need to prove $(2)\Rightarrow (3)$. This is an easy computation: for all $k\in\N$ we have
\begin{align*}
    P^k & = \left( (Id_N\oplus \mathcal A) \tilde P (Id_N\oplus \mathcal A^{-1} ) \right)^k
    \\ & = (Id_N\oplus \mathcal A) \left( \tilde P (Id_N\oplus \mathcal A^{-1} )(Id_N\oplus \mathcal A)  \right)^{k-1} \tilde P (Id_N\oplus \mathcal A^{-1} ) 
    \\ & = (Id_N\oplus \mathcal A) \tilde P^k (Id_N\oplus \mathcal A^{-1} ) ,
\end{align*}
and thus $(P^k)_{11} = (\tilde P^k)_{11}$, that is $\Lambda_\omega(P)=\Lambda_\omega(\tilde P)$. Observe that this same result would not hold without the additional assumptions about full rank of the sub-matrices $P_{12},\tilde P_{12}, P_{21},\tilde P_{21}$. In fact, in this case we would obtain
$$ (Id_N\oplus A^+)(Id_N\oplus A) = (Id_N \oplus A^+A), $$
which is in general not the identity, because $A^+$ is the \emph{right} inverse of $A$.
\end{proof}

\begin{remark}\label{rmk:intuition}
It follows from the proof of Theorem \ref{characterization-new} that the transition matrix $P$ can be identified (up to the matrix $ A$) from $\Lambda_k(P)$, with $k=3$. It is natural to ask whether the same result can be obtained for smaller $k$. It is however easily seen that both $P_{11}$ and $(P^2)_{11}$ contain no information about $P_{22}$, and thus recovery is impossible with $k<3$. Intuitively, one can say that $P_{11}$ contains information about couples of vertices in $B^2$, $(P^2)_{11}$ in $B\times (X\setminus B)$, and  $(P^3)_{11}$ in $(X\setminus B)^2$. 
\end{remark}

We complement the above result with the following lemma, which analyzes the linear independence conditions from the previous theorem. The following is a general algebraic result, which holds true also when the transition matrix $P$ does not take the specific form required for fractional conductivity. 

\begin{lemma}
    Let $P\in \R^{(N+M)\times (N+M)}$ be the transition matrix obtained by row-wise normalization of a positive, symmetric matrix $C\in \R^{(N+M)\times (N+M)}$. Then $P_{12}$ has full column rank if and only if $P_{21}$ has full row rank.
\end{lemma}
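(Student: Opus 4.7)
The plan is to exploit the fact that row-wise normalization corresponds to left multiplication by a positive diagonal matrix, which preserves rank, combined with the symmetry of $C$.

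First I would write $P = D^{-1} C$, where $D = \mathrm{diag}(m(x))_{x\in X}$ is the diagonal matrix of normalization constants. Since $C$ has positive entries, every $m(x)>0$, so $D$ is invertible. Splitting $D$ into blocks $D = D_1 \oplus D_2$ with $D_1 \in \R^{N\times N}$ and $D_2\in\R^{M\times M}$ corresponding respectively to observable and unobservable vertices, the block decomposition of $P$ reads
\begin{equation*}
P_{12} = D_1^{-1} C_{12}, \qquad P_{21} = D_2^{-1} C_{21}.
\end{equation*}

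Next I would use the symmetry of $C$, which yields $C_{21} = C_{12}^T$, so that $P_{21} = D_2^{-1} C_{12}^T$. Since $D_1$ and $D_2$ are invertible, left multiplication by $D_1^{-1}$ (resp.\ $D_2^{-1}$) is a bijection on $\R^{N\times M}$ (resp.\ $\R^{M\times N}$), and in particular preserves rank. Therefore
\begin{equation*}
\mathrm{rank}(P_{12}) = \mathrm{rank}(C_{12}), \qquad \mathrm{rank}(P_{21}) = \mathrm{rank}(C_{12}^T) = \mathrm{rank}(C_{12}).
\end{equation*}
In particular $\mathrm{rank}(P_{12}) = \mathrm{rank}(P_{21})$. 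The columns of $P_{12}\in\R^{N\times M}$ are linearly independent if and only if $\mathrm{rank}(P_{12}) = M$, and the rows of $P_{21}\in\R^{M\times N}$ are linearly independent if and only if $\mathrm{rank}(P_{21}) = M$; by the equality just displayed, these two conditions are equivalent.

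There is no real obstacle here: the argument is essentially linear-algebraic bookkeeping, and the only ingredients used are positivity of the entries of $C$ (to guarantee invertibility of $D$) and symmetry of $C$ (to identify $C_{21}$ with $C_{12}^T$). If anything, the only point deserving care is to make sure that the block ordering chosen in Section~\ref{sec:random-walk-data} is consistent with the splitting of $D$, which is immediate since $D$ is diagonal and therefore commutes with the block decomposition.
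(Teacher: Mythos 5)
Your proof is correct and rests on exactly the same two ingredients as the paper's (positivity of the normalization constants and symmetry of $C$); the paper phrases the argument element-wise, showing that a dependence $\alpha$ among the rows of $P_{21}$ corresponds to the dependence $\beta=\alpha\oslash m$ among the columns of $P_{12}$, which is precisely your diagonal factor $D_2^{-1}$ in matrix form. Your rank-based packaging via $P_{12}=D_1^{-1}C_{12}$, $P_{21}=D_2^{-1}C_{12}^T$ is a clean equivalent formulation of the same idea.
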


\begin{proof}
    Observe that the statement is not trivial, given that the matrix $P$ is not in general symmetric. Let $D:= \mbox{diag}(C\mathbf 1)^{-1}$, so that $P= DC$. Let us use for the matrix $C$ the same splitting as in \eqref{eq:P-splitting}, observing that in this case we have $C_{21} = C_{12}^T$, because $C$ is symmetric by assumption. Then
    $$ P_{12} = D_{11}C_{12}, \qquad P_{21} = D_{22}C_{12}^T. $$ Because the matrices $D_{11}, D_{22}$ are invertible, we have
    $$\mbox{rank}(P_{12}) = \mbox{rank}(C_{12}) = \mbox{rank}(C_{12}^T) = \mbox{rank}(P_{21}),$$
    from which it follows that $P_{12}$ has full column rank if and only if $P_{21}$ has full row rank.
\end{proof}

\begin{remark}\label{remark-admissible-2}
In the case of fractional conductivity, we in particular have that
$C=WC'W$, where $W:=\mbox{diag}(\gamma^{1/2})$ and
    $$ C'(x,y) := \begin{cases}
    c(x), & x=y, \\
    d_G(x,y)^{-(n+2s)},  & x\neq y.
\end{cases} $$  
Then we see that $P_{12}= D_{11}W_{11}C'_{12}W_{22}$, and therefore $\mbox{rank}(P_{12}) = \mbox{rank}(C'_{12})$ since the matrices $W_{11}$ and $W_{22}$ are invertible. This implies that $P_{12}$ has full column rank (or equivalently $P_{21}$ has full row rank) if and only if this is true of $C'_{12}$. However, this matrix depends only on the graph $G$, and not on the conductivity. In light of this, we see that it makes sense to assume that a graph $G$ satisfies the admissibility condition \ref{adm-cond-1}, without making mention of $\gamma$.
\end{remark}

We show by means of the next example that there exist plenty of graphs satisfying the admissibility condition \ref{adm-cond-1}. It is worth noticing, however, that a graph verifying \ref{adm-cond-1} must not necessarily be of the kind constructed below.

\begin{example}\label{example-admissible-G} \emph{
    In order to construct an example of a graph satisfying \ref{adm-cond-1}, we will produce a graph $G=(X,E)$ with the property that for all $x\in X\setminus B$ there exists exactly one $b\in B$ such that $d_G(x,b) = 1 < d_G(y,b)$ for all $y\in X\setminus (B\cup\{x\})$. By properly ordering the points of $X$, we see that the square matrix composed of the first $M$ columns of $(d_G(x,y)^{-(n+2s)})_{21}$ has $1$ along the main diagonal and strictly smaller off-diagonal values. It is easily seen that if $M$ is small enough ($M< 1+2^{n+2s}$ will suffice), then this matrix is strictly diagonally dominant, and thus invertible. As a result, the rank of $(d_G(x,y)^{-(n+2s)})_{21}$ is $M$, which proves that its rows (and therefore the rows of $P_{21}$) are linearly independent. \\
    A graph of this kind is easily constructed by attaching to every point $x_i$ of a given graph $J_0$ a new graph $J_i$, and considering the union of the attached graphs as the observable set $B$. More precisely, consider $I\in\N$ and a family of graphs $\{J_i\}_{i=0}^{I}$, with $J_i=(V_i,E_i)$ for all $i=0,...,I$, and $|V_0|=I$. Let $\{x_i\}_{i=1}^{I}$ be an enumeration of the vertices of $J_0$, and choose a vertex $z_i$ for each graph $J_i$, $i=1, ..., I$. We construct a new graph $G$ with vertices $X:=\bigcup_{i=0}^{I} V_i$  and edges given by $E:=E_0 \cup \bigcup_{i=i}^{I} (E_i \cup \{x_i, z_i\} )$. We also declare $B:= \bigcup_{i=i}^{I} V_i$, so that $X\setminus B$ is the set of the vertices of the original graph $J_0$. It is clear that the new graph $G=(X,E)$ has the required property, and thus it satisfies \ref{adm-cond-1} (see Figure \ref{figure-admissible-G})}.
\end{example}    

\begin{figure}[!h]
\centering
\begin{tikzpicture}[
Jnode/.style={circle, draw=blue!60, fill=blue!5, thin, minimum size=1mm},
J0node/.style={circle, draw=black!60, thin, minimum size=1mm}
]
\node[J0node][label={[xshift=-.8em, yshift=-.2em]\small $x_1$}] (N1) {};
\node[J0node][label={[xshift=-.8em, yshift=-.2em]\small $x_2$}] (N2) [below=of N1] {};
\node[J0node][label={[xshift=-.8em, yshift=-.2em]\small $x_3$}] (N3) [right=of N1] {};
\node[J0node][label={[xshift=-.8em, yshift=-.2em]\small $x_4$}] (N4) [below=of N3] {};

\node[Jnode][label={[xshift=-.8em, yshift=-.2em]\small \color{blue} $z_1$}] (N5) [left=of N1]  {};
\node[Jnode] (N6) [left=of N5]  {};
\node[Jnode] (N7) [above=of N5] {};
\node[Jnode][label={[xshift=-.8em, yshift=-.2em]\small \color{blue} $z_3$}] (N8) [right=of N3] {};
\node[Jnode] (N9) [above=of N8] {};
\node[Jnode] (U1) [right=of N8] {};
\node[Jnode][label={[xshift=-.8em, yshift=-.2em]\small \color{blue} $z_4$}] (U2) [below=of N8] {};
\node[Jnode] (U3) [below=of U1] {};
\node[Jnode][label={[xshift=-.8em, yshift=-.2em]\small \color{blue} $z_2$}] (U4) [below=of N5] {};
\node[Jnode] (U5) [below=of N6] {};

\draw[ultra thick] (N1.south) -- (N2.north);
\draw[ultra thick] (N1.east) -- (N3.west);
\draw[ultra thick] (N2.east) -- (N4.west);
\draw[ultra thick] (N2.north east) -- (N3.south west);

\draw[ultra thick, blue] (N6.east) -- (N5.west);
\draw[ultra thick, blue] (N8.east) -- (U1.west);
\draw[ultra thick, blue] (U2.east) -- (U3.west);
\draw[ultra thick, blue] (N6.south) -- (U5.north);
\draw[ultra thick, blue] (U5.north east) -- (N5.south west);
\draw[ultra thick, blue] (N7.south) -- (N5.north);
\draw[ultra thick, blue] (N9.south) -- (N8.north);
\draw[ultra thick, blue, dotted] (N5.east) -- (N1.west);
\draw[ultra thick, blue, dotted] (N3.east) -- (N8.west);
\draw[ultra thick, blue, dotted] (U4.east) -- (N2.west);
\draw[ultra thick, blue, dotted] (N4.east) -- (U2.west);
\end{tikzpicture}
\caption{An example of a graph $G=(X,E)$ satisfying \ref{adm-cond-1}, constructed as in Example \ref{example-admissible-G}. Here the graph $J_0$ of vertices $V_0=X\setminus B$ is represented in black, while the observable subgraph $(B,E\setminus E_0)$ is represented in blue. }
\label{figure-admissible-G}
\end{figure}

\section{Discussion of the gauge class}\label{sec:gauge}

Let $G,\gamma$ respectively be a graph and a conductivity, and consider the transition matrix $\tilde P$ corresponding to the pair $G,\gamma$. Let $g: GL_M\times \R^{(N+M)\times (N+M)}\rightarrow \R^{(N+M)\times (N+M)}$ be the group action given by
$$ g(A,\tilde P):= (Id_N\oplus A) \tilde P (Id_N\oplus A^{-1} ), $$
and let $GL_M\cdot \tilde P$ indicate the orbit of $\tilde P$ under $g$, i.e. the set
$$ GL_M\cdot \tilde P := \{ g(A,\tilde P) : A \in GL_M \} .$$
We start by making the following important observation:

\begin{lemma}\label{lem:unique}
    Let $G$ be an admissible graph corresponding to the transition probability $\tilde P$, and assume $P\in GL_M\cdot \tilde P$. Then there exists a unique $A\in GL_M$ such that $P=g(A,\tilde P)$.
\end{lemma}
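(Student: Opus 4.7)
The plan is to argue by contradiction: suppose two distinct matrices $A_1, A_2 \in GL_M$ both satisfy $P = g(A_i, \tilde P)$, and deduce that their difference is forced to vanish by admissibility condition \ref{adm-cond-1}. Setting $B := A_2^{-1} A_1 \in GL_M$, the equality $g(A_1, \tilde P) = g(A_2, \tilde P)$ rearranges to
\begin{equation*}
(Id_N \oplus B)\, \tilde P \,(Id_N \oplus B^{-1}) = \tilde P,
\end{equation*}
or equivalently $(Id_N \oplus B)\, \tilde P = \tilde P \,(Id_N \oplus B)$. My goal will be to show that this forces $B = Id_M$, which gives $A_1 = A_2$.

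The next step is to expand the commutation relation block by block. Writing $\tilde P$ in terms of the four blocks $\tilde P_{11}, \tilde P_{12}, \tilde P_{21}, \tilde P_{22}$ as in Section \ref{sec:random-walk-data} and carrying out the two block-matrix products, a direct comparison yields the system
\begin{equation*}
\tilde P_{12} = \tilde P_{12}\, B, \qquad B\, \tilde P_{21} = \tilde P_{21}, \qquad B\, \tilde P_{22} = \tilde P_{22}\, B.
\end{equation*}
The only one of these I need is the first: it says that each column of $B - Id_M$ lies in the kernel of the linear map $\tilde P_{12} : \R^M \to \R^N$.

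Here is where the admissibility of $G$ enters. By condition \ref{adm-cond-1}, the block $\tilde P_{12}$ has full column rank, so its kernel is trivial. Hence $B - Id_M = 0$, which gives $A_1 = A_2$ and concludes the proof. I do not foresee a real obstacle in this argument: the only subtlety is making sure that \ref{adm-cond-1} is indeed a property of the graph $G$ (independent of the conductivity) so that the hypothesis of the lemma really makes the full-column-rank condition available; this is exactly what Remark \ref{remark-admissible-2} guarantees.
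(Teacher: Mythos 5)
Your proof is correct and takes essentially the same approach as the paper: both reduce to the identity $g(A^{-1}B,\tilde P)=\tilde P$, read off an off-diagonal block equation, and kill the discrepancy with a rank condition. The only difference is the choice of block: the paper uses $\tilde P_{21}=A^{-1}B\,\tilde P_{21}$ together with the full \emph{row} rank of $\tilde P_{21}$ (which it obtains from \ref{adm-cond-1} via the lemma equating row independence of $P_{21}$ with column independence of $P_{12}$), whereas you use $\tilde P_{12}=\tilde P_{12}B$ and the full column rank of $\tilde P_{12}$, invoking \ref{adm-cond-1} verbatim and bypassing that equivalence.
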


\begin{proof}
    Assume that $A,B\in GL_M$ verify $g(A,\tilde P) = g(B,\tilde P)$. Then $\tilde P = g(A^{-1}B, \tilde P)$, which implies that
    $$ \tilde P_{21} = A^{-1}B\tilde P_{21}.$$
    Since $G$ is admissible, we know that $\tilde P_{21}$ has full row rank, and so it admits a unique right inverse $\tilde P_{21}^+$. Applying $\tilde P_{21}^+$ to the right in the equality above, we deduce $A^{-1}B = Id$, i.e. $A=B$.
\end{proof}

We have seen in the previous section that if $G$ verifies \ref{adm-cond-1}, then $GL_M\cdot \tilde P$ can be recovered from the random walk data. However, while they satisfy the data, not all the matrices $P\in GL_M\cdot \tilde P$ are transition matrices corresponding to a graph $G'$ and a conductivity $\gamma'$. We have the following:
\begin{proposition}\label{lem:3-cond}
    A matrix $P\in \R^{(N+M)\times (N+M)}$ is obtained by row-wise normalization of a positive (resp. non-negative), symmetric matrix $C$ if and only if
    \begin{enumerate}[label=(P\arabic*)]
    \item \label{cond-p1} $P> 0$ (resp. $P\geq 0$),
    \item \label{cond-p2} $P\mathbf 1 = \mathbf 1$, that is $P$ is row-stochastic, and
    \item \label{cond-p3} $\hat P := P\oslash P^T$ is transitive.
\end{enumerate}
\end{proposition}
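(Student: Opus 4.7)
My plan is to handle the two implications separately, with the key observation being that condition (P3) encodes exactly the existence of the normalizing function $m$ as a ratio.

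For the forward (``only if'') direction, I would assume $P(x,y)=C(x,y)/m(x)$ with $C$ positive and symmetric. Conditions \ref{cond-p1} and \ref{cond-p2} are immediate from $m(x)>0$ and $m(x)=\sum_y C(x,y)$. For \ref{cond-p3}, the central computation is $\hat P(x,y) = P(x,y)/P(y,x) = (C(x,y)/m(x))\cdot(m(y)/C(y,x)) = m(y)/m(x)$ by the symmetry of $C$. Transitivity then drops out: $\hat P(x,z)\hat P(z,y) = (m(z)/m(x))(m(y)/m(z)) = m(y)/m(x) = \hat P(x,y)$.

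For the reverse (``if'') direction, the plan is to reconstruct $m$ from $\hat P$. Fix an arbitrary vertex $x_0$ and define $m(x):=\hat P(x_0,x)$, which is positive by \ref{cond-p1}. Using \ref{cond-p3} with $x=y=x_0$ I get $\hat P(x_0,x_0)=\hat P(x_0,x_0)^2$, forcing $\hat P(x_0,x_0)=1$; applying \ref{cond-p3} to $(x_0,z,x_0)$ then yields $\hat P(z,x_0)=1/\hat P(x_0,z)=1/m(z)$. One more application of transitivity, this time to $(x,x_0,y)$, gives the key identity $\hat P(x,y) = \hat P(x,x_0)\hat P(x_0,y) = m(y)/m(x)$ for all $x,y$. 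Now set $C(x,y):=m(x)P(x,y)$. Positivity is clear; the symmetry $C(x,y)=C(y,x)$ is equivalent to $m(x)P(x,y)=m(y)P(y,x)$, which is precisely $\hat P(x,y)=m(y)/m(x)$. Finally, $\sum_y C(x,y) = m(x)\sum_y P(x,y) = m(x)$ by \ref{cond-p2}, so $C$ row-normalizes back to $P$.

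The non-negative case is the only place where I expect friction: the Hadamard quotient $\hat P = P\oslash P^T$ is only defined if all entries of $P^T$ are nonzero. I would read \ref{cond-p3} in the non-negative setting as implicitly requiring $P(x,y)=0 \iff P(y,x)=0$ (which is forced by symmetry of $C$) together with transitivity of $\hat P$ on the support; the construction of $m$ then runs within each ``connected component'' of the support, choosing one base point per component. Apart from this bookkeeping, no new idea is needed, so the strictly positive case contains the essential mathematics and I would present it first, treating the non-negative version as a routine adaptation.
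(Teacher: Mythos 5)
Your proof is correct and follows essentially the same route as the paper's: both directions hinge on the identity $\hat P(x,y)=m(y)/m(x)$ and the definition $C(x,y)=m(x)P(x,y)$. The only difference is that you establish the equivalence between transitivity of $\hat P$ and the ratio form directly, via the base-point construction $m(x):=\hat P(x_0,x)$, where the paper instead cites this characterization from the literature; the paper likewise treats only the strictly positive case explicitly, so your caveat about the non-negative case applies equally to its argument.
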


\begin{proof}
    Let us assume first that $P$ is obtained by row-wise normalization of a positive, symmetric matrix $C$. Then properties (P1) and (P2) are trivially true by definition. In order to see that (P3) holds, let $m:= C\mathbf 1$ and $D:= \mbox{diag}(m)^{-1}$, so that $P= DC$. Since $C$ is symmetric, we get
$$ D^{-1}P = C = C^T = P^T D^{-1}, $$
and thus $m(x)P(x,y)=m(y)P(y,x)$ for all $x,y\in X$, because
$$ D^{-1}(x,x)P(x,y) = \sum_{z\in X} D^{-1}(x,z)P(z,y) = \sum_{z\in X} P^T(x,z) D^{-1}(z,y) = P^T(x,y) D^{-1}(y,y).$$
In other words, it holds that
\begin{equation}\label{PoslashPT-transitive}
    (P\oslash P^T)(x,y) = \frac{m(y)}{m(x)}
\end{equation} 
which is immediately seen to be equivalent to the transitivity of $P\oslash P^T$ by \cite[Proposition 1]{FRS99} and the fact that $P$ is row-stochastic.

Conversely, let the properties (P1)-(P3) hold. Then as observed there must exist a vector $m$ such that \eqref{PoslashPT-transitive} holds, and we can define as above $D:= \mbox{diag}(m)^{-1}$ and $C := D^{-1}P$. Since $P$ is positive, the matrix $P\oslash P^T$ is itself positive, and we can choose $m$ with positive entries. It follows that $C>0$. Given that $P\mathbf 1 = \mathbf 1$, it is immediately seen that 
$$ C\mathbf 1 = D^{-1}P\mathbf 1 = D^{-1}\mathbf 1 = m, $$
so that indeed $P$ is obtained by row-wise normalization of a positive matrix $C$. Finally, $C$ is symmetric because, by the diagonality of $D$,
$$ C(x,y) = m(x)P(x,y) = m(x) (P\oslash P^T)(x,y) P^T(x,y) = m(y)P(y,x)=C(y,x). $$
\end{proof}

Thus a transition matrix $P$ corresponds to an interaction matrix $C$ if and only if conditions (P1)-(P3) hold. If $P,\tilde P$ both verify the random walk data in the sense of Theorem \ref{characterization-new}, and conditions (P1)-(P3) hold for both, then the corresponding interaction matrices $C,\tilde C$ are related as follows:

\begin{proposition}\label{C-from-P-new}

Let $P,\tilde P \in \R^{(N+M)\times (N+M)}$ verify conditions (P1)-(P3) from Proposition \ref{lem:3-cond}, and assume that $P\in GL_M\cdot\tilde P$. Let $C, \tilde C \in \R^{(N+M)\times (N+M)}$ be interaction matrices associated to $P, \tilde P$  respectively. Then there exists an invertible matrix $A \in \R^{M\times M}$ such that $$C =(Id_N\oplus D_{22}^{-1}AD_{22}) \mbox{\emph{diag}}(m\oslash \tilde m)\tilde C (Id_N\oplus A^{-1}).$$ 
    If $A$ is diagonal, then 
    $$C =(Id_N\oplus A) \mbox{\emph{diag}}(m\oslash \tilde m)\tilde C (Id_N\oplus A^{-1}).$$
    If in particular $A=Id_M$, then there exists a scalar $\lambda>0$ such that $C=\lambda\tilde C$.
\end{proposition}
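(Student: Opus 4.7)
The plan is to obtain all three identities by direct algebraic manipulation from the fundamental relations $C = \mbox{diag}(m) P$ and $\tilde C = \mbox{diag}(\tilde m) \tilde P$, where $m := C\mathbf 1$ and $\tilde m := \tilde C\mathbf 1$, together with the gauge relation $P = (Id_N \oplus A)\tilde P (Id_N \oplus A^{-1})$ for some $A \in GL_M$ coming from the hypothesis $P \in GL_M \cdot \tilde P$. I split $m = (m_1, m_2)$ according to the observable/unobservable partition so that $D = D_{11} \oplus D_{22}$ with $D_{22} = \mbox{diag}(m_2)^{-1}$ (and similarly for $\tilde m$). With these notations in place, the argument is essentially a commutation computation.

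The key step is the block identity
\[
\mbox{diag}(m)(Id_N \oplus A) = (Id_N \oplus D_{22}^{-1} A D_{22})\, \mbox{diag}(m),
\]
which follows by checking each diagonal block and using $D_{22}^{-1} = \mbox{diag}(m_2)$, so that $D_{22}^{-1} A D_{22}\,\mbox{diag}(m_2) = \mbox{diag}(m_2) A$. Substituting this into $C = \mbox{diag}(m)(Id_N \oplus A)\tilde P (Id_N \oplus A^{-1})$ and replacing $\tilde P = \mbox{diag}(\tilde m)^{-1}\tilde C$, the central factor $\mbox{diag}(m)\mbox{diag}(\tilde m)^{-1}$ collapses to $\mbox{diag}(m \oslash \tilde m)$ by diagonality, which yields the first formula. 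For the second claim, if $A$ is diagonal then $A$ commutes with $D_{22}$, so $D_{22}^{-1} A D_{22} = A$ and the simplified formula drops out immediately.

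For the third claim, with $A = Id_M$ the formula reduces to $C = \mbox{diag}(m \oslash \tilde m)\tilde C$, i.e.\ $C(x,y) = (m(x)/\tilde m(x))\tilde C(x,y)$ entrywise. The symmetry of both $C$ and $\tilde C$, guaranteed by Proposition \ref{lem:3-cond}, forces
\[
\left(\frac{m(x)}{\tilde m(x)} - \frac{m(y)}{\tilde m(y)}\right)\tilde C(x,y) = 0 \qquad \text{for all } x,y.
\]
The main obstacle is precisely this final step: to promote this vanishing into the desired scalar identity, I would invoke the strong form of \ref{cond-p1}, namely $\tilde P > 0$, which via $\tilde C = \mbox{diag}(\tilde m)\tilde P$ and the strict positivity of $\tilde m$ gives $\tilde C > 0$ everywhere. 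This forces the ratio $m(x)/\tilde m(x)$ to be a positive constant $\lambda$ independent of $x$, and the conclusion $C = \lambda\tilde C$ follows. Note that without the strong positivity assumption the argument can only conclude that the ratio is constant on the support of $\tilde C$, which in principle could be a proper subset of $X^2$; hence the pointwise positivity of $\tilde P$ is essential here.
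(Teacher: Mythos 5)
Your proof is correct and follows essentially the same route as the paper: the same commutation identity $\mbox{diag}(m)(Id_N\oplus A) = (Id_N\oplus D_{22}^{-1}AD_{22})\,\mbox{diag}(m)$ drives the first two formulas, and your final step (constancy of $m\oslash\tilde m$ from the symmetry of $C,\tilde C$ plus positivity) is a repackaging of the paper's use of the relation $(P\oslash P^T)(x,y)=m(y)/m(x)$ together with $P=\tilde P$. Your explicit observation that strict positivity of $\tilde P$ is what upgrades "constant on the support of $\tilde C$" to "constant everywhere" is a correct clarification of a point the paper leaves implicit.
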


\begin{proof} For general transition matrices $P, \tilde P$ such that $P\in GL_M\cdot \tilde P$ we have
\begin{align*}
    C & = D^{-1}P = D^{-1}(Id_N\oplus A) \tilde P (Id_N\oplus A^{-1})
    \\ & = D^{-1}(Id_N\oplus A)\tilde D\tilde C (Id_N\oplus A^{-1})
    \\ & = D^{-1}(Id_N\oplus A)D\,\mbox{diag}(m\oslash \tilde m)\tilde C (Id_N\oplus A^{-1})
    \\ & = (Id_N\oplus D_{22}^{-1}AD_{22}) \mbox{diag}(m\oslash \tilde m) \tilde C (Id_N\oplus A^{-1}).
\end{align*}

If in particular $A$ is diagonal, then $D_{22}$ and $A$ commute, and it follows
$$C =(Id_N\oplus A) \mbox{diag}(m\oslash \tilde m)\tilde C (Id_N\oplus A^{-1}).$$
This implies that there exist two functions $g_1, g_2:X\rightarrow (0,+\infty)$ such that for all $x,y\in X$ 
$$ C(x,y) = g_1(x)\tilde C(x,y)g_2(y). $$ 

Fix now $x_0\in X$ and assume $A=Id_M$, which implies $P= \tilde P$. By equation \eqref{PoslashPT-transitive} we have $$(m\oslash \tilde m)(x) =  \frac{m(x_0)}{\tilde m(x)}(P\oslash P^T)(x_0,x)  = \frac{m(x_0)}{\tilde m(x)}(\tilde P\oslash \tilde P^T)(x_0,x) = (m\oslash \tilde m)(x_0)=:\lambda$$
for all $x\in X$, and thus 
$$ C = (Id_N\oplus A)\mbox{diag}(m\oslash \tilde m) \tilde C (Id_N\oplus A^{-1}) = \lambda \tilde C. $$ 
\end{proof}

As a result of the previous Proposition, we see that even in the case that the transition matrix $P$ verifying conditions (P1)-(P3) is completely known, the underlying interaction matrix $C$ can only be recovered up to a positive factor $\lambda$. This is the maximum amount of information that we can recover from a transition probability matrix: because the transition matrices $C$ and $\lambda C$, where $\lambda >0$, correspond to the same transition probability matrix, the exact values of the conductivity can never be recovered from the random walk data. In the next section we will consider the recovery of the edge set $E$ and the conductivity $\gamma$ from $C$. The rest of this section is dedicated to the study of the set $\mathcal A \subseteq GL_M$ consisting of those matrices $A$ such that $P=g(A,\tilde P)$ verifies conditions (P1)-(P3), and thus indeed corresponds to an interaction matrix $C$.\\

Let $\tilde P$ be the true transition matrix determined by the unknown graph and conductivity. Since we start from the assumption that $P$ verifies the random walk data, we know that $P\in GL_M\cdot\tilde P$. Moreover, $\tilde P$ verifies conditions (P1)-(P3) by assumption. Thus $\mathcal A$ consists of those invertible matrices which preserve conditions (P1)-(P3). We have the following:

\begin{proposition}\label{prop:char-P2}
    Assume that $\tilde P$ verifies (P1)-(P3) and $P= g(A,\tilde P)$, where $A\in GL_M$. Then $P$ verifies (P2) if and only if $A$ is row-stochastic.
\end{proposition}

\begin{proof}
    Observe that we always have
    $$(Id \oplus A^{-1})P\mathbf 1 = \tilde P(Id\oplus A^{-1})\mathbf 1 = \tilde P v,$$
    where we let $v:= (Id\oplus A^{-1})\mathbf 1 = (\mathbf 1_N, A^{-1}\mathbf 1_M)$. Because $A\in GL_M$, we see that $P\mathbf 1 = \mathbf 1$ is equivalent to $\tilde P v = v$, which in turn is equivalent to $v\in\mathcal E_1$, the eigenspace of $\tilde P$ corresponding to the eigenvalue $1$. Observe that the fractional random walk we study is equivalent to a (regular) random walk in the complete undirected graph with vertex set $\widetilde X$, in which the weight associated to each edge $\{x,y\}$ is $\widetilde C(x,y)$. Therefore, $\tilde P$ is irreducible and non-negative. By the Perron-Frobenius theorem for irreducible non-negative matrices \cite[Theorem 8.4.4]{HJ12}, we deduce that it must be $\mathcal E_1 = \mbox{span}(\mathbf 1)$. This implies that $v=\mathbf 1$, and so $A^{-1}\mathbf 1_M = \mathbf 1_M$. Since $A$ is invertible, this gives the equivalent condition $A \mathbf 1_{M} = \mathbf 1_{M}$, i.e. $A$ is row-stochastic.
\end{proof}

As for condition (P1), characterizing the matrices $P$ which are equivalent to a positive matrix $\tilde P$ (i.e. such that there exists an invertible matrix $M$ such that $P=M\tilde PM^{-1}$) is a surprisingly hard open problem in linear algebra, which we do not explore in this article. 

The structure of the gauge can be summarized in the following way (see Figure \ref{figure-gauge}). 

\begin{figure}[!h] 
\centering
\begin{tikzpicture}
  \draw (-0.5,0.5) ellipse (3 and 2.5);
  \draw (0,0) ellipse (2 and 1.5);
  \filldraw[fill=red, draw=red, fill opacity=0.3]  (0.5,-0.25) ellipse (1 and 0.75);
  
  \draw (8.5,0.5) ellipse (3 and 2.5);
  \draw (8,0) ellipse (2 and 1.5);
  \filldraw[fill=blue, draw=blue, fill opacity=0.3] (7.5,-0.25) ellipse (1 and 0.75);

\draw [ultra thick, black, arrows = {-Stealth}] (3,0.5) -- (5,0.5);
\coordinate [label=below:$g(\cdot\,\mbox{,}\tilde P)$]  (A) at (4,0.4);
\coordinate [label=below:$GL_M$]  (B) at (2,-1.5);
\coordinate [label=below:$g(GL_M\,\mbox{,}\tilde P)$]  (B) at (5.5,-1.4);
\coordinate [label=left:$\mathcal A'$]  (C) at (-1.75,1);
\coordinate [label=right:$g(\mathcal A'\,\mbox{,}\tilde P)$]  (D) at (9.75,1);
\coordinate [label=above:\textcolor{red}{$\mathcal A$}]  (E) at (-0.8,0);
\coordinate [label=above:\textcolor{blue}{$g(\mathcal A\,\mbox{,}\tilde P)$}]  (F) at (9.1,0);
\end{tikzpicture}
\caption{The structure of the gauge, as seen in the set $GL_M$ of invertible matrices and in its image under the group action $g$.}
\label{figure-gauge}
\end{figure}
On the left-hand side we see the whole group $GL_M$, its subset $\mathcal A':=\{A\in GL_M : A\mathbf 1_M = \mathbf 1_M\}$, and the red set $\mathcal A$ consisting of those invertible matrices which preserve conditions (P1)-(P3). On the right-hand side we see the images under the function $g(\cdot,\tilde P)$ of the sets on the left-hand side. Recall that the function $g(\cdot,\tilde P)$ is bijective by Lemma \ref{lem:unique}. In particular:
\begin{itemize}
    \item $g(GL_M,\tilde P)$, which we have indicated also by $GL_M\cdot\tilde P$, is the orbit of $\tilde P$ under the group action $g$. By the characterization Theorem \ref{characterization-new}, these are all the matrices which satisfy the random walk data.
    \item By Proposition \ref{prop:char-P2}, we have that $g(\mathcal A',\tilde P)$ consists of all the matrices which satisfy both the random walk data and (P2).
    \item By Proposition \ref{lem:3-cond}, to say that a (transition) matrix $P$ satisfies conditions (P1)-(P3) is equivalent to say that it is obtained by row-wise normalization of a positive, symmetric matrix $C$. Thus the elements of $g(\mathcal A,\tilde P)$ are exactly the transition matrices associated to a positive, symmetric matrix $C$ which satisfy the random walk data.
\end{itemize}

\section{Recovery of the edge set and conductivity} 

 We now deal with the final recovery of the edge set $E$ and the conductivity $\gamma$, given the interaction matrix $C$. We will observe that the edge set can be completely recovered in some cases, while the conductivity can only be determined up to a positive factor. \\

The following is the main result of this section:

\begin{theorem}[Reconstruction]\label{main-result}
Let $G=(X,E)$ be a graph satisfying \ref{adm-cond-2}, and let $\sigma_1,\sigma_2:X\rightarrow (0,+\infty)$. Assume that the  matrix $f(x,y):= \frac{\sigma_1(x)\sigma_2(y)}{ d_G(x,y) }$ is known for all $x\neq y\in X$. Then the edge set $E$ can be uniquely recovered, and $\sigma_1, \sigma_2$ can be recovered up to a positive factor.
\end{theorem}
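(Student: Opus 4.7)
The plan is to first reduce $f$ to a gauge-free distance quantity, then use the rigid combinatorial structure imposed by leaves to anchor $d_G$, and finally read off the edge set and the conductivities. The starting point is to form
\begin{equation*}
F(x,y) := f(x,y)\,f(y,x) = \frac{h(x)h(y)}{d_G(x,y)^2}, \qquad h := \sigma_1\sigma_2,
\end{equation*}
for $x \neq y$, and note that the $h$-factor cancels in the quotient
\begin{equation*}
\frac{F(x,y)\,F(z,w)}{F(x,w)\,F(z,y)} = \left(\frac{d_G(x,w)\,d_G(z,y)}{d_G(x,y)\,d_G(z,w)}\right)^{2}
\end{equation*}
for distinct $x,y,z,w$, so that taking positive square roots recovers the full $4$-point cross-ratio table $\kappa(x,y,z,w) := d_G(x,w)d_G(z,y)/(d_G(x,y)d_G(z,w))$ directly from the data.

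Next I would identify a leaf $\ell^*$ with $e(\ell^*)>3$ together with its unique neighbor $n^*$. The key observation is that if $\ell$ is a leaf with neighbor $n$, then every shortest path from $\ell$ passes through $n$, forcing $d_G(\ell,z) = 1 + d_G(n,z)$ for every $z\neq\ell$. Consequently the ratios $d_G(\ell,z)/d_G(n,z)$, which can be extracted from $\kappa$ up to a single positive scalar depending only on the pair $(\ell,n)$, take values in the rigid arithmetic set $\{(k+1)/k : k=1,2,\dots,e(n)\}$. Under \ref{adm-cond-2}, this sequence contains at least the initial terms $\{2,\,3/2,\,4/3\}$, whose consecutive ratios $4/3$ and $9/8$ together form a characteristic signature; I would argue that no non-leaf pair can reproduce it, thereby pinning down $(\ell^*,n^*)$ and simultaneously the integer values $d_G(n^*,z)$ and $d_G(\ell^*,z)$ for every $z\in X$. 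This leaf-identification step is the main obstacle: the admissibility hypothesis \ref{adm-cond-2} is tailored precisely for it, with $e(\ell^*)>3$ supplying a sufficiently long arithmetic sequence of ratios to separate the leaf signature from spurious candidates, while $|L|\geq 2$ rules out small configurations where a nontrivial multiplicative gauge could produce an alternative valid graph distance (as in the $K_3$-versus-$P_3$ ambiguity on three vertices).

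With the row $d_G(\ell^*,\cdot)$ in hand as an anchor, the cross-ratio identity $d_G(y,w)/d_G(y,z) = \kappa(y,z,\ell^*,w)\cdot d_G(\ell^*,w)/d_G(\ell^*,z)$ determines $d_G(y,w)/d_G(y,z)$ for every $y\neq\ell^*$ and all distinct $z,w\neq y,\ell^*$. Thus each row $d_G(y,\cdot)$ on $X\setminus\{y,\ell^*\}$ is known up to a single positive scalar $c(y)$, and the symmetry $d_G(y,z)=d_G(z,y)$ ties all these scalars into a single global factor $c_0$. To fix $c_0$ I would invoke integer-valuedness: for any $y\neq\ell^*$, connectedness of $G$ together with $|X|\geq 5$ forces $y$ to have a neighbor in $X\setminus\{\ell^*\}$, so the minimum value of the reconstructed row $d_G(y,\cdot)$ must be $1$, which pins down $c_0$. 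Once the full $d_G$ is known, $E = \{\{x,y\} : d_G(x,y) = 1\}$, and the conductivities follow from $\sigma_1(x)\sigma_2(y) = f(x,y)\,d_G(x,y)$: fixing $\sigma_1(x_0) = 1$ at a reference vertex $x_0$ determines $\sigma_2(y) = f(x_0,y)d_G(x_0,y)$ and $\sigma_1(x) = f(x,y_0)d_G(x,y_0)/\sigma_2(y_0)$ for any $y_0\neq x_0$, with the residual rescaling $(\sigma_1,\sigma_2)\mapsto(\lambda\sigma_1,\sigma_2/\lambda)$ accounting for the stated positive-factor ambiguity.
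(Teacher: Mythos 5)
Your reduction to scale-invariant distance cross-ratios and your final steps (propagating distances from a known leaf row, reading off $E$ as the distance-one pairs, and factorizing the rank-one matrix $\sigma_1(x)\sigma_2(y)=f(x,y)\,d_G(x,y)$) are sound and close in spirit to the paper's Steps 1 and 4. The genuine gap is in the leaf-identification step, which you yourself flag as the main obstacle and then assert rather than prove. The claim that the ``signature'' consisting of the top three sorted values of $z\mapsto d_G(a,z)/d_G(b,z)$ having consecutive quotients $4/3$ and $9/8$ characterizes leaf--neighbour pairs is false. Take $X=\{a,b,w,z_1,z_2,z_3\}$ with edges $a\sim b$, $a\sim w$, $b\sim z_1$, $z_1\sim z_2$, $z_2\sim z_3$; this graph satisfies \ref{adm-cond-2} ($w$ and $z_3$ are leaves and $e(z_3)=5$), and neither $a$ nor $b$ is a leaf, yet the multiset of ratios $d_G(a,z)/d_G(b,z)$ is $\{2,\,3/2,\,4/3,\,1/2\}$, whose three largest elements reproduce your signature exactly. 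What actually distinguishes leaf--neighbour pairs is the \emph{full spread} $\max_z d_G(a,z)/d_G(b,z)\,\big/\,\min_z d_G(a,z)/d_G(b,z)$ (the paper's $R_{ab}$): it lies in $(1,2)$ when $\{a,b\}$ is a leaf and its neighbour, but is a positive integer otherwise (it equals $4$ in the example above). Your top-three test ignores the small end of the range, which is precisely where the non-leaf behaviour shows up.

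A second omission: even once the leaf--neighbour pairs are isolated, you must still decide \emph{which} element of each pair is the leaf. The reciprocal multisets $\{(k+1)/k\}$ and $\{k/(k+1)\}$ are each only known up to a common positive scalar, so the orientation is not free; this is exactly where the paper needs the hypothesis $\max_{\ell\in L}e(\ell)>3$ and a separate combinatorial argument (the iterated $\arg\max$ construction of the set $Z$ in its Step 3), and your sketch does not address it. Once these two points are repaired, the remainder of your proposal (anchoring on the row $d_G(\ell^*,\cdot)$, using symmetry and integrality to fix the global scalar, and the rank-one factorization of $\sigma_1^T\sigma_2$) would go through and is essentially equivalent to the paper's Step 4.
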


\begin{remark}\label{significance}
The significance of Theorem \ref{main-result} is that it allows us to recover the edge set relative to a fractional conductivity random walk in the assumption that $g_1(x) \tilde C(x,y) g_2(y)$ is known everywhere besides on the diagonal. In fact, we can define
$$ f(x,y)=\left(g_1(x) \tilde C(x,y) g_2(y)\right)^{1/(n+2s)}, \qquad \sigma_j(x) = \left(g_j(x)\gamma^{1/2}(x)\right)^{1/(n+2s)} \quad \mbox{ for } j=1,2,$$
which allows us to deduce $E$ and $g_j\gamma^{1/2}$, $j=1,2$ (up to a positive factor) by the theorem. If in particular $g_1(x)=g_2(x)=\lambda^{1/2}$ for all $x\in X$, then the conductivity can itself be recovered up to a positive factor.
\end{remark}

\begin{proof}
\textbf{Step 1.} Fix three distinct points $a,b,c \in X$, which is allowed by the assumption that $G$ satisfies condition \ref{adm-cond-2}, and define the function $F_{abc}:X\setminus \{a,b\}\rightarrow \R$ by
$$ F_{abc}(x) := \frac{f(a,x)f(b,c)}{f(b,x)f(a,c)} = \frac{d_G(b,x)d_G(a,c)}{d_G(a,x)d_G(b,c)} = q_{abc} \frac{d_G(b,x)}{d_G(a,x)}, $$
where $q_{abc}\in \Q$ is some unknown but fixed rational number. Observe that $F_{abc}$ can be computed from our knowledge of $f$, and thus we can determine two points $\underline x, \overline x\in X\setminus\{a,b\}$ realizing the minimum and maximum of $F_{abc}$. In general, neither the minimum nor the maximum point need to be unique, but it does not affect our arguments. In particular, we can compute the ratio 
$$ R_{ab}:=\frac{\max F_{abc}}{\min F_{abc}}=\frac{F_{abc}(\overline x)}{F_{abc}(\underline x)} = \frac{d_G(b,\overline x)d_G(a,\underline x)}{d_G(a,\overline x)d_G(b,\underline x)}, $$
which is independent of $q_{abc}$ and, therefore, of the point $c\in X$. 
\\

\textbf{Step 2 (localization of the extremal points).} Fix two distinct points $a,b\in X$. For any $x\in X\setminus\{a,b\}$ we can find two paths of minimum length between $x$ and $a,b$ respectively. 
Let $x_1$ be the second point along the fixed path from $x$ to $b$. Given the minimality of the length of such path, it must necessarily be $$d_G(b,x_1)=d_G(b,x)-1.$$ On the other hand, due to the minimality of the chosen path between $a$ and $x$, the quantity $d_G(a,x_1)$ is either equal, one more or one less than $d_G(a,x)$ (see Figure \ref{figure-one-more-less}).

\begin{figure}[!h]
\centering
\begin{tikzpicture}[
Jnode/.style={circle, draw=blue!60, fill=blue!5, thin, minimum size=1mm},
J0node/.style={circle, draw=black!60, thin, minimum size=1mm},
Enode/.style={}
]
\node[J0node][label={[xshift=-.8em, yshift=-2em]\small $a$}] (N1) {};
\node[J0node] (N2) [right=of N1] {};
\node[J0node] (N3) [right=of N2] {};
\node[J0node][label={[xshift=.8em, yshift=-2em]\small $x$}] (N4) [right=of N3] {};
\node[J0node][label={[xshift=1.2em, yshift=-1em]\small $x_1$}] (N5) [above=of N4] {};
\node[J0node][label={[xshift=.8em, yshift=-.2em]\small $b$}] (N6) [above=of N5] {};
\node[Enode] (N7) [right=of N4] {};
\node[J0node][label={[xshift=-.8em, yshift=-2em]\small $a$}] (N8) [right=of N7] {};
\node[J0node] (N9) [right=of N8] {};
\node[J0node] (N10) [right=of N9] {};
\node[J0node][label={[xshift=.8em, yshift=-2em]\small $x$}] (N11) [right=of N10] {};
\node[J0node][label={[xshift=1.2em, yshift=-1em]\small $x_1$}] (N12) [above=of N11] {};
\node[J0node][label={[xshift=.8em, yshift=-.2em]\small $b$}] (N13) [above=of N12] {};
\node[J0node][label={[xshift=.8em, yshift=-.2em]\small $b$}] (N14) [below=of N7] {};
\node[J0node][label={[xshift=0em, yshift=-2.2em]\small $x_1$}] (N15) [below=of N14] {};
\node[J0node][label={[xshift=.8em, yshift=-2em]\small $x$}] (N16) [right=of N15] {};
\node[J0node] (N17) [left=of N15] {};
\node[J0node][label={[xshift=-.8em, yshift=-2em]\small $a$}] (N18) [left=of N17] {};
\draw[ultra thick] (N10.north east) -- (N12.south west);
\draw[ultra thick] (N1.east) -- (N2.west);
\draw[ultra thick] (N2.east) -- (N3.west);
\draw[ultra thick] (N3.east) -- (N4.west);
\draw[ultra thick] (N8.east) -- (N9.west);
\draw[ultra thick] (N9.east) -- (N10.west);
\draw[ultra thick] (N10.east) -- (N11.west);
\draw[ultra thick] (N15.east) -- (N16.west);
\draw[ultra thick] (N18.east) -- (N17.west);
\draw[ultra thick] (N17.east) -- (N15.west);
\draw[ultra thick] (N4.north) -- (N5.south);
\draw[ultra thick] (N5.north) -- (N6.south);
\draw[ultra thick] (N11.north) -- (N12.south);
\draw[ultra thick] (N12.north) -- (N13.south);
\draw[ultra thick] (N15.north) -- (N14.south);
\end{tikzpicture}
\caption{The quantity $d_G(a,x_1)$ is either one more, the same, or one less than $d_G(a,x)$. Due to the minimality of the path between $x$ and $a$, no other choices are possible.}
\label{figure-one-more-less}
\end{figure}

In any case, after some easy calculations, if the relation $d_G(b,x)\leq d_G(a,x)$ holds we have that
$$ \frac{d_G(b,x)}{d_G(a,x)} \geq \frac{d_G(b,x_1)}{d_G(a,x_1)}, \qquad \mbox{and } \qquad d_G(b,x_1)\leq d_G(a,x_1). $$
Repeating this process a finite amount of times, we find that $$ \frac{d_G(b,x)}{d_G(a,x)} \geq \frac{d_G(b,x')}{d_G(a,x')}, $$
where $x'$ is the second point in the fixed path from $b$ to $x$, and as such verifies $d_G(b,x')=1$. Therefore, if there exist points $x\sim b$ with $x\neq a$, it is clear that $\underline x$ must be among them. Similarly, if there exist points $x\sim a$ with $x\neq b$, necessarily $\overline x$ is one of them. Observe that if both conditions are verified, then $R_{ab}\in \N$.
\\

Assume instead that there is no $x\sim b$ such that $x\neq a$, that is, $b\in L$ is a leaf and its unique neighbour is $a$. In this case for every $x\in X\setminus\{a,b\}$ we have $\frac{d_G(b,x)}{d_G(a,x)} = \frac{M+1}{M}$
for some $M\geq 1$, and thus $R_{ab}= \frac{2M^*}{M^*+1}$, where $M^*$ is the maximum allowed $M$ (see Figure \ref{figure-M-star}). If we had $M^*=1$, then $G$ would be a star centered at $a$, which is not allowed by the admissibility assumption for $G$. Therefore $M^*>1$ and $R_{ab} \in (1,2)$, which in particular implies $R_{ab}\notin\N$. A similar result is obtained in the assumption that there are no points $x\sim a$ with $x\neq b$. Since $|X|>2$, the two conditions from the previous paragraph can not fail together.\\

\begin{figure}[!h]
\centering
\begin{tikzpicture}[
J0node/.style={circle, draw=black!60, thin, minimum size=1mm}
]
\node[J0node][label={[xshift=-.8em, yshift=-2em]\small $b$}] (N1) {};
\node[J0node][label={[xshift=-.8em, yshift=-2em]\small $a$}] (N2) [right=of N1] {};
\node[J0node][label={[xshift=-.8em, yshift=-2em]\small $\overline x$}] (N3) [right=of N2] {};
\node[J0node] (N4) [right=of N3] {};
\node[J0node] (N5) [above=of N4] {};
\node[J0node] (N6) [below=of N4] {};
\node[J0node][label={[xshift=-.8em, yshift=-.2em]\small $\underline x$}] (N7) [right=of N6] {};
\draw[ultra thick] (N6.north west) -- (N3.south east);
\draw[ultra thick] (N5.south west) -- (N3.north east);
\draw[ultra thick] (N1.east) -- (N2.west);
\draw[ultra thick] (N2.east) -- (N3.west);
\draw[ultra thick] (N3.east) -- (N4.west);
\draw[ultra thick] (N6.east) -- (N7.west);
\draw[ultra thick] (N4.south) -- (N6.north);
\end{tikzpicture}
\caption{If $b$ is a leaf and $a$ is its unique neighbour, then $R_{ab}=\frac{2M^*}{M^*+1}$, where $M^*$ is the eccentricity of $a$. In this case, we have that $M^*=d_G(a,\underline x)$.}
\label{figure-M-star}
\end{figure}

\textbf{Step 3 (recovery of leaf set $L$).} Let $a,b\in X$ be distinct points. If $R_{ab}\notin\N$, then we know that exactly one of $a,b$ is a leaf, and $a\sim b$.
Thus the set $Y:=\{ \{a,b\} \in X^2 : R_{ab}\notin\N\}$ is comprised of all the pairs constituted by a leaf and its unique neighbour. However, given that the couples constituting $Y$ are not ordered, it remains to be determined which element of each couple is the leaf, and which is the neighbouring point. If $a,b\in X$ are two distinct leaves, we shall necessarily have that $\overline x$ and $\underline x$ are their respective unique neighbours, which implies that $R_{ab} = (d_G(a,b)-1)^2$ is a perfect square. The same happens when $a,b\in X$ are two distinct neighbours of leaves, because in this case $\overline x, \underline x$ are the corresponding leaves and thus $R_{ab}=(d_G(a,b)+1)^2$ is again a perfect square. However, if $a,b\in X$ are a leaf and the neighbour of a different leaf, we obtain $R_{ab} = (d_G(a,b)-1)(d_G(a,b)+1)= d_G(a,b)^2-1$, which is never a perfect square. These considerations allow us to split the set $\bigcup Y$ into two subsets $Y_1, Y_2$, one containing all the leaves and the other containing all their neighbours, in such a way that each $\{a,b\}\in Y$ has an element from each set. However, while the partition of $\bigcup Y$ is itself known, we do not yet know which of the two sets $Y_1, Y_2$ is the set $L$ of the leaves, and which in turn is the set $N$ of the neighbours of leaves. (see Figure \ref{figure-leaves-neighbours}). 

\begin{figure}[!h]
\centering
\begin{tikzpicture}[
Jnode/.style={circle, draw=blue!60, fill=blue!5, thin, minimum size=1mm},
J0node/.style={circle, draw=black!60, thin, minimum size=1mm}
]
\node[J0node] (N1) {};
\node[J0node] (N2) [below=of N1] {};
\node[J0node] (N3) [right=of N1] {};
\node[J0node] (N4) [below=of N3] {};

\node[Jnode][label={[xshift=-.8em, yshift=-.2em]\small \color{blue} $N_1$}] (N5) [left=of N1]  {};
\node[Jnode][label={[xshift=-.8em, yshift=-.2em]\small \color{blue} $L_1$}] (N6) [left=of N5]  {};
\node[Jnode][label={[xshift=-.8em, yshift=-.2em]\small \color{blue} $N_2$}] (N8) [right=of N3] {};
\node[Jnode][label={[xshift=-.8em, yshift=-.2em]\small \color{blue} $L_2$}] (U1) [right=of N8] {};
\node[J0node] (U2) [below=of N8] {};
\node[J0node] (U4) [below=of N5] {};

\draw[ultra thick] (N1.south) -- (N2.north);
\draw[ultra thick] (N1.east) -- (N3.west);
\draw[ultra thick] (N2.east) -- (N4.west);
\draw[ultra thick] (N2.north east) -- (N3.south west);
\draw[ultra thick] (N3.south east) -- (U2.north west);

\draw[ultra thick, blue] (N6.east) -- (N5.west);
\draw[ultra thick, blue] (N8.east) -- (U1.west);
\draw[ultra thick] (N5.east) -- (N1.west);
\draw[ultra thick] (N5.south) -- (U4.north);
\draw[ultra thick] (N3.east) -- (N8.west);
\draw[ultra thick] (U4.east) -- (N2.west);
\draw[ultra thick] (N4.east) -- (U2.west);
\end{tikzpicture}
\caption{An example of graph with two leaf-neighbour pairs highlighted. Here $\{L_1, N_1\}, \{L_2,N_2\} \in Y$. Observe that the couples are not ordered. }
\label{figure-leaves-neighbours}
\end{figure}

Assume $\{a,b\}\in Y$ with $R_{ab}> 4/3$, and fix any $c\in X\setminus\{a,b\}$. Let $\underline x \in \arg\min F_{abc}$, and consider the set $$Z:=\bigcup_{y\in \arg\max F_{abc}} \arg\max F_{y\underline xa}.$$
If $a\in N$, then $\arg\max F_{abc} = \{y\in X: y\neq b, y\sim a\}$. Thus the minimum path between $a$ and $\underline x$ must pass through some $y^*\in \arg\max F_{abc}$, because $b\in L$. This implies that $d_G(y^*,\underline x)= d_G(a,\underline x)-1$, and the path from $a$ to $\underline x$ with $a$ removed is a minimum path between $y^*$ and $\underline x$. Notice that it can not be the case that $y^*=\underline x$, because otherwise there would not be points in $X$ of distance larger than $1$ from $a$, and therefore $G$ would be a star centered at $a$. Observe that $\arg\max F_{y^*\underline xa}$ consists of all $z\sim y^*$ with maximum distance from $\underline x$. Given that by the triangle inequality $d_G(z,\underline x) \leq d_G(y^*,\underline x)+1 = d_G(a,\underline x)$, and $a\sim y^*$, we conclude that $a\in \arg\max F_{y^*\underline xa}$, and therefore to $Z$. Thus $\{a,b\}\cap Z \neq \emptyset$ (see Figure \ref{figure-step3-1}).

\begin{figure}[!h]
\centering
\begin{tikzpicture}[
Jnode/.style={circle, draw=blue!60, fill=blue!5, thin, minimum size=1mm},
J0node/.style={circle, draw=black!60, thin, minimum size=1mm}
]
\node[J0node] (N1) {};
\node[J0node] (N3) [right=of N1] {};

\node[Jnode][label={[xshift=-.8em, yshift=-.2em]\small \color{blue} $y^*$}] (N5) [left=of N1]  {};
\node[J0node][label={[xshift=-.8em, yshift=-.2em]\small \color{blue} $a$}] (N6) [left=of N5]  {};
\node[J0node][label={[xshift=-.8em, yshift=-.2em]\small \color{blue} $b$}] (U6) [left=of N6]  {};
\node[Jnode] (N7) [above=of N5] {};
\node[J0node][label={[xshift=-.8em, yshift=-.2em]\small \color{blue} $\underline x$}] (N8) [right=of N3] {};
\node[Jnode] (U4) [below=of N5] {};

\draw[ultra thick] (N1.east) -- (N3.west);

\draw[ultra thick] (N6.east) -- (N5.west);
\draw[ultra thick] (N6.north east) -- (N7.south west);
\draw[ultra thick] (N6.south east) -- (U4.north west);
\draw[ultra thick] (U6.east) -- (N6.west);
\draw[ultra thick] (N5.east) -- (N1.west);
\draw[ultra thick] (N3.east) -- (N8.west);
\end{tikzpicture}
\caption{The situation in the second part of Step 3, when $b\in L$. We have indicated the set $\arg\max F_{abc}$ in blue.  }
\label{figure-step3-1}
\end{figure}

\noindent If instead $a\in L$, we have $b\sim \underline x$ and $d_G(b,y) = M^* > 2$ for all $y\in \arg\max F_{abc}$, by the assumption $R_{ab}>4/3$. Now $d_G(y,\underline x) \geq d_G(y,b)-1 >1$, which means that $y\not\sim \underline x$. Thus necessarily if $z\in\arg\max F_{y\underline xa}$ it must be $d_G(y,z)=1$ by Step 2, and in particular $z\notin \{a,b\}$. Therefore, $\{a,b\}\cap Z=\emptyset$ (see Figure \ref{figure-step3-2}).

\begin{figure}[!h]
\centering
\begin{tikzpicture}[
Jnode/.style={circle, draw=blue!60, fill=blue!5, thin, minimum size=1mm},
J0node/.style={circle, draw=black!60, thin, minimum size=1mm}
]
\node[J0node][label={[xshift=-.8em, yshift=-.2em]\small $b$}] (N1) {};
\node[J0node][label={[xshift=-.8em, yshift=-.2em]\small $\underline x$}] (N2) [below=of N1] {};
\node[J0node] (N3) [right=of N1] {};
\node[J0node][label={[xshift=-.8em, yshift=-.2em]\small  $a$}] (N5) [left=of N1]  {};
\node[Jnode][label={[xshift=-.8em, yshift=-.2em]\small \color{blue} $z$}] (N8) [right=of N3] {};
\node[Jnode] (N9) [above=of N8] {};
\node[J0node][label={[xshift=.8em, yshift=-.2em]\small $y$}] (U1) [right=of N8] {};
\node[Jnode] (U2) [below=of N8] {};

\draw[ultra thick] (N1.south) -- (N2.north);
\draw[ultra thick] (N1.east) -- (N3.west);

\draw[ultra thick] (N8.east) -- (U1.west);
\draw[ultra thick] (N5.east) -- (N1.west);
\draw[ultra thick] (N3.east) -- (N8.west);
\draw[ultra thick] (N9.south east) -- (U1.north west);
\draw[ultra thick] (U2.north east) -- (U1.south west);
\end{tikzpicture}
\caption{The situation in the second part of Step 3, when $a\in L$. We have indicated the set $\arg\max F_{y\underline x a}$ in blue. }
\label{figure-step3-2}
\end{figure}

\noindent By the above argument we are able to distinguish in each pair $\{a,b\}\in Y$ the leaf from its neighbour, in the assumption that $R_{ab}>4/3$. Observe that it suffices to correctly identify just one pair belonging to $Y$ in order to completely determine $L$ and $N$. Thus assuming that $\max_{\ell \in L} e(\ell) > 3$ allows us to recover $L$. 
\\

\textbf{Step 4 (final recovery of $E$ and $\sigma$).} If $a,b\in L$ are two distinct leaves, then by the above step we know that $R_{ab} = (d_G(a,b)-1)^2$, which lets us compute the distance $d_G(a,b)$. Thus $d_G$ can be recovered on $L\times L$, which of course entails that $d_G$ can also be recovered on $L\times N$ and $N\times N$ by the properties of the set $Y$. Let now $a,c\in L$, $b\in N$ such that $b\sim a$, and $x\in X$. We have
$$F_{abc}(x)\frac{d_G(b,c)}{d_G(a,c)} = \frac{d_G(b,x)}{d_G(a,x)} = \frac{d_G(a,x)-1}{d_G(a,x)}, $$
because any minimum path between $a$ and $x$ must pass through $b$. Since the left-hand side is completely known, we can compute $d_G(a,x)$. This means that $d_G$ can be recovered on $L\times X$. Finally, let $x,y\in X$ and $a,c\in L$. We can compute
$$d_G(y,x)=F_{ayc}(x)\frac{d_G(y,c)d_G(a,x)}{d_G(a,c)}, $$
which completes the recovery of $d_G$ on $X\times X$. Thus we have recovered the edge set $E$.
\\

The knowledge of $E$ and $f$ is enough to deduce the off-diagonal entries of the matrix $\Sigma := \sigma_1^T\sigma_2 \in \R^{(N+M)\times (N+M)}$. If now $x,y,z\in X$, we can write
$$ \sigma_1(x)\sigma_2(x) = \frac{\sigma_1(y)\sigma_2(x) \sigma_1(x)\sigma_2(z)}{\sigma_1(y)\sigma_2(z)}= \frac{f(y,x)d_G(x,y) f(x,z)d_G(x,z)}{f(y,z)d_G(y,z)}, $$
and thus we can compute the diagonal entries of $\Sigma$, given that the right-hand side is completely known. Since $\Sigma$ does not vanish, and it has a factorization in matrices of rank $1$, we deduce rank$(\Sigma)=1$. Thus $\Sigma = \sigma_1^T\sigma_2$ is a full-rank factorization, which means that $\sigma_1, \sigma_2$ can be computed up to a positive factor by \cite[Theorem 2]{PO99}. 
\end{proof}

The discussion of Section \ref{sec:gauge}, Theorem \ref{main-result} and Remark \ref{significance} now give the following corollary:

\begin{corollary}\label{cor:use-main-theorem}
    If $P=\tilde P$, then $E=\tilde E$ and the conductivities $\gamma$ and $\tilde \gamma$ differ by a positive factor.
\end{corollary}

\section{Proof of the main result}

We are now in the position of proving the main Theorem \ref{th:main}.

\begin{proof}[Proof of Theorem \ref{th:main}]
    The proof follows as a combination of the results obtained in the previous sections, and is therefore already contained in the above discussions. We shall however summarize it here for the convenience of the reader. 
    
    Let $P,\tilde P$ be transition matrices, and assume that the corresponding random walk data verifies
 $$ \Lambda_3(P)=\Lambda_3(\tilde P). $$   
    By the admissibility assumption for $G$ and Theorem \ref{characterization-new}, this is already enough in order to determine the cardinality of $X$ and deduce the fact that $P$ belongs to the orbit $GL_M\cdot\tilde P$. Moreover, we have proved that any further knowledge of the transition probabilities between observable points for larger values of $k\in\N$, that is the assumption $$ \Lambda_\omega(P)=\Lambda_\omega(\tilde P), $$ would not help any further in the determination of $P$. However, not all the elements of the orbit $GL_M\cdot\tilde P$ are transition matrices, as discussed in Section \ref{sec:gauge}: the matrix $P$ rather belongs to $g(\mathcal A, \tilde P)$, where the set $\mathcal A$ comprises all those matrices $A\in GL_M$ such that $g(A,\tilde P)$ is obtained by row-wise normalization of a positive, symmetric matrix. By Proposition \ref{prop:char-P2}, all matrices in $\mathcal A$ must be row-stochastic. \\ 
    If in particular $P=\tilde P$, by Theorem \ref{main-result} and Corollary \ref{cor:use-main-theorem} we have $E=\tilde E$, and the conductivities $\gamma$ and $\tilde \gamma$ differ by a positive factor. Moreover, both $\tilde E$ and $\tilde \gamma$ (up to a positive factor) can be reconstructed from the random walk data.
\end{proof}

\section{Open problems and conjectures}\label{sec:open-problems}

We dedicate the last section of the article to a brief discussion of some open problems which arose during the preparation of the paper. These all seem quite interesting developments of our initial study, and we plan to come back to them in future works. We are grateful to the many colleagues whose questions inspired the following list. \\

\begin{itemize}
    \item \textbf{Stability for the random walk data.} Throughout the paper, we have used random walk data in the form $\Lambda_\omega(P)$ (or rather $\Lambda_K(P)$), which amounts to knowing the transition probabilities in any number of jumps between observable points. As observed in Section 3, this kind of data can be produced from observation of the random walk $H^{x_0}_t$ on $B$ alone, as the probability of events of the kind $\{H^{x_0}_t=x, H^{x_0}_{t+k}=y\}$, for $x,y\in B$ and $k\in\N$. However, this procedure requires in principle the observation of the \emph{whole} random walk, and in an experimental setting the precision of the measured data will depend on the length of the observation period. It is therefore natural to wonder in what measure a small error introduced in the measured transition probabilities will interfere with the reconstruction of the edge set $E$ and the conductivity $\gamma$. Given the rigid structure imposed by the constraint of working on a finite graph, it seems reasonable to expect that one could be presented with two kids of situations. For most cases, we anticipate that a small error in the random walk data will only cause a controllable error in the reconstruction of $\gamma$, leaving the reconstruction of the edge set $E$ unchanged; however, we expect there to be borderline situations in which the measurement error will cause a sharp switch between two possibly very different configurations of the graph. We believe this possible phenomenon to be very worth of attention, as to our knowledge it has not been observed before.\\
    \item \textbf{Fractional conductivity on a manifold.} As explained in the introduction, one of the motivations of our article is to serve as a stepping stone for the study of the inverse problem for the fractional conductivity operator on a manifold. As discussed, there are still a number of problems to be solved in this direction, including but not limited to the generalization of the technique presented in the Euclidean setting in \cite{Val09} for the fractional Laplacian and \cite{C20} for the fractional conductivity operator, the construction of suitable graph approximations for manifolds, and the very interesting question of the relation between our discrete random walk data and the inverse problem data on a manifold, which will presumably be given in any of the usual Dirichlet-to-Neumann or Cauchy forms. We shall consider these questions in future works. \\ 
    \item \textbf{Relation with the Unique continuation property.} The characterization Theorem \ref{characterization-new} shows that the random walk data $\Lambda_3(P)$ is both necessary and sufficient for the best possible reconstruction of the transition matrix $P$. It should be noticed that this is a striking nonlocal phenomenon which can not happen in the case of the classical random walk on a graph, where only jumps between adjacent points are allowed. This is due to the fact that in the classical case the random walk data of the form $\Lambda_3(P)$ does not contain information about vertices whose distance from the observable set is larger than $3$ (as these are never reached by the random walker in just $3$ steps), and thus in this situation the full random walk data $\Lambda_\omega(P)$ is needed. On the other hand, the fractional conductivity operator on a manifold obtained by the procedure discussed above will presumably present nonlocal behaviours akin to the Unique continuation property (UCP) for the fractional Laplacian (see \cite{GSU20} and all subsequent works). The study of the relation between the two described nonlocal properties would be of great interest, as this could help shed new light on these fundamental phenomena and perhaps provide an alternative proof of the UCP for fractional operators. \\
    \item \textbf{Possible generalizations.} At the moment of writing, it is not known whether the admissibility conditions \ref{adm-cond-1} and \ref{adm-cond-2}, which we have shown to be sufficient for proving Theorem \ref{th:main}, are also necessary for uniqueness. It would be interesting to extend our results to more general geometrical assumptions, especially in light of the fact that this would require a different method of proof which does not rely on distance functions for leaves.
\end{itemize}

\section{Funding}
This work was was partially supported by a AdG project 101097198 of the European Research Council, Centre of Excellence of Research Council of Finland and the FAME flagship of the Research Council of Finland (grant 359186).

\begin{spacing}{2}  
    \bibliography{refs}

@article{BCR24,
  title={On instability properties of the fractional {C}alder\'on problem},
  author={Hendrik Baers and Giovanni Covi and Angkana Ruland},
  year={2024},
  journal={Preprint ArXiv: 2405.08381}
}

@article{C22,
author = {Covi, Giovanni},
year = {2022},
title = {Uniqueness for the anisotropic fractional conductivity equation},
      journal={Preprint ArXiv: 2212.11331}
}

@article{CGRU23,
      title={A Reduction of the Fractional {C}alder{\'o}n Problem to the Local {C}alder{\'o}n Problem by Means of the Caffarelli-Silvestre Extension}, 
      author={Giovanni Covi and Tuhin Ghosh and Angkana R{\"u}land and Gunther Uhlmann},
      year={2023},
      journal={Preprint ArXiv: 2305.04227}
}

@article{CRZ22,
      title={The global inverse fractional conductivity problem}, 
      author={Giovanni Covi and Jesse Railo and Philipp Zimmermann},
      year={2022},
      journal={Preprint ArXiv: 2204.04325}
}

@article{GU21,
      title={The {C}alder{\'o}n problem for nonlocal operators},
      author={Ghosh, Tuhin and Uhlmann, Gunther},
      year={2021},
      journal={Preprint ArXiv: 2110.09265}
}

@article{LLU23,
      title={The {C}alder{\'o}n problem for nonlocal parabolic operators: A new reduction from the nonlocal to the local}, 
      author={Lin, Ching-Lung and Lin, Yi-Hsuan and Uhlmann, Gunther},
      year={2023},
      journal={Preprint ArXiv: 2308.09654}
}

@article {BGU21,
    AUTHOR = {Bhattacharyya, S. and Ghosh, T. and Uhlmann, G.},
     TITLE = {Inverse problems for the fractional-{L}aplacian with lower
              order non-local perturbations},
   JOURNAL = {Trans. Amer. Math. Soc.},
  FJOURNAL = {Transactions of the American Mathematical Society},
    VOLUME = {374},
      YEAR = {2021},
    NUMBER = {5},
     PAGES = {3053--3075},
      ISSN = {0002-9947},
   MRCLASS = {35R30 (35R11 46E35)},
  MRNUMBER = {4237942},
       DOI = {10.1090/tran/8151},
       URL = {https://doi.org/10.1090/tran/8151},
}

@article {BILL21,
    AUTHOR = {Bl{\aa}sten, Emilia and Isozaki, Hiroshi and Lassas, Matti and Lu,
              Jinpeng},
     TITLE = {Inverse problems for discrete heat equations and random walks
              for a class of graphs},
   JOURNAL = {SIAM J. Discrete Math.},
  FJOURNAL = {SIAM Journal on Discrete Mathematics},
    VOLUME = {37},
      YEAR = {2023},
    NUMBER = {2},
     PAGES = {831--863},
      ISSN = {0895-4801},
   MRCLASS = {05C50 (05C22 05C81)},
  MRNUMBER = {4598377},
MRREVIEWER = {Enno Pais},
       DOI = {10.1137/21M1439936},
       URL = {https://doi.org/10.1137/21M1439936},
}

@article {CLR20,
    AUTHOR = {Ceki\'{c}, Mihajlo and Lin, Yi-Hsuan and R\"{u}land, Angkana},
     TITLE = {The {C}alder\'{o}n problem for the fractional {S}chr\"{o}dinger
              equation with drift},
   JOURNAL = {Calc. Var. Partial Differential Equations},
  FJOURNAL = {Calculus of Variations and Partial Differential Equations},
    VOLUME = {59},
      YEAR = {2020},
    NUMBER = {3},
     PAGES = {Paper No. 91, 46},
      ISSN = {0944-2669},
   MRCLASS = {35R30 (26A33 35J10 35J70 35R11)},
  MRNUMBER = {4092686},
MRREVIEWER = {Sebasti\'{a}n Zamorano},
       DOI = {10.1007/s00526-020-01740-6},
       URL = {https://doi.org/10.1007/s00526-020-01740-6},
}

@article {C20a,
    AUTHOR = {Covi, Giovanni},
     TITLE = {An inverse problem for the fractional {S}chr\"{o}dinger equation
              in a magnetic field},
   JOURNAL = {Inverse Problems},
  FJOURNAL = {Inverse Problems. An International Journal on the Theory and
              Practice of Inverse Problems, Inverse Methods and Computerized
              Inversion of Data},
    VOLUME = {36},
      YEAR = {2020},
    NUMBER = {4},
     PAGES = {045004, 24},
      ISSN = {0266-5611},
   MRCLASS = {78A46 (35J25 35Q60 35R11 35R30)},
  MRNUMBER = {4072353},
MRREVIEWER = {Qiang Liu},
       DOI = {10.1088/1361-6420/ab661a},
       URL = {https://doi.org/10.1088/1361-6420/ab661a},
}

@article {C20,
    AUTHOR = {Covi, Giovanni},
     TITLE = {Inverse problems for a fractional conductivity equation},
   JOURNAL = {Nonlinear Anal.},
  FJOURNAL = {Nonlinear Analysis. Theory, Methods \& Applications. An
              International Multidisciplinary Journal},
    VOLUME = {193},
      YEAR = {2020},
     PAGES = {111418, 18},
      ISSN = {0362-546X},
   MRCLASS = {35R11 (35R30)},
  MRNUMBER = {4062967},
       DOI = {10.1016/j.na.2019.01.008},
       URL = {https://doi.org/10.1016/j.na.2019.01.008},
}

@article {CMRU22,
    AUTHOR = {Covi, Giovanni and M\"{o}nkk\"{o}nen, Keijo and Railo, Jesse and
              Uhlmann, Gunther},
     TITLE = {The higher order fractional {C}alder\'{o}n problem for linear
              local operators: uniqueness},
   JOURNAL = {Adv. Math.},
  FJOURNAL = {Advances in Mathematics},
    VOLUME = {399},
      YEAR = {2022},
     PAGES = {Paper No. 108246, 29},
      ISSN = {0001-8708},
   MRCLASS = {35R11 (46E35)},
  MRNUMBER = {4383014},
       DOI = {10.1016/j.aim.2022.108246},
       URL = {https://doi.org/10.1016/j.aim.2022.108246},
}

@article {CdHS22,
    AUTHOR = {Covi, Giovanni and de Hoop, Maarten and Salo, Mikko},
     TITLE = {Uniqueness in an inverse problem of fractional elasticity},
   JOURNAL = {Proc. A.},
  FJOURNAL = {Proceedings A},
    VOLUME = {479},
      YEAR = {2023},
    NUMBER = {2278},
     PAGES = {Paper No. 20230474, 27},
      ISSN = {1364-5021},
   MRCLASS = {35R30 (35R11 74G75)},
  MRNUMBER = {4666586},
MRREVIEWER = {Massimo Lanza de Cristoforis},
}

@article {CRTZ24,
    AUTHOR = {Covi, Giovanni and Railo, Jesse and Tyni, Teemu and
              Zimmermann, Philipp},
     TITLE = {Stability estimates for the inverse fractional conductivity
              problem},
   JOURNAL = {SIAM J. Math. Anal.},
  FJOURNAL = {SIAM Journal on Mathematical Analysis},
    VOLUME = {56},
      YEAR = {2024},
    NUMBER = {2},
     PAGES = {2456--2487},
      ISSN = {0036-1410},
   MRCLASS = {35R30 (26A33 42B37 46F12)},
  MRNUMBER = {4719389},
       DOI = {10.1137/22M1533542},
       URL = {https://doi.org/10.1137/22M1533542},
}

@article {DGLZ12,
    AUTHOR = {Du, Qiang and Gunzburger, Max and Lehoucq, R. B. and Zhou,
              Kun},
     TITLE = {Analysis and approximation of nonlocal diffusion problems with
              volume constraints},
   JOURNAL = {SIAM Rev.},
  FJOURNAL = {SIAM Review},
    VOLUME = {54},
      YEAR = {2012},
    NUMBER = {4},
     PAGES = {667--696},
      ISSN = {0036-1445},
   MRCLASS = {35R11 (35K20)},
  MRNUMBER = {3023366},
       DOI = {10.1137/110833294},
       URL = {https://doi.org/10.1137/110833294},
}

@article {DGLZ13,
    AUTHOR = {Du, Qiang and Gunzburger, Max and Lehoucq, R. B. and Zhou,
              Kun},
     TITLE = {A nonlocal vector calculus, nonlocal volume-constrained
              problems, and nonlocal balance laws},
   JOURNAL = {Math. Models Methods Appl. Sci.},
  FJOURNAL = {Mathematical Models and Methods in Applied Sciences},
    VOLUME = {23},
      YEAR = {2013},
    NUMBER = {3},
     PAGES = {493--540},
      ISSN = {0218-2025},
   MRCLASS = {26B12 (26B15 26B20 45P05 46F12)},
  MRNUMBER = {3010838},
MRREVIEWER = {Mikhail Korobkov},
       DOI = {10.1142/S0218202512500546},
       URL = {https://doi.org/10.1142/S0218202512500546},
}

@incollection {FRS99,
    AUTHOR = {Farkas, Andr\'{a}s and R\'{o}zsa, P\'{a}l and Stubnya, Etelka},
     TITLE = {Transitive matrices and their applications},
      NOTE = {Special issue dedicated to Hans Schneider (Madison, WI, 1998)},
   JOURNAL = {Linear Algebra Appl.},
  FJOURNAL = {Linear Algebra and its Applications},
    VOLUME = {302/303},
      YEAR = {1999},
     PAGES = {423--433},
      ISSN = {0024-3795},
   MRCLASS = {15A18 (91B06)},
  MRNUMBER = {1733544},
       DOI = {10.1016/S0024-3795(99)00208-6},
       URL = {https://doi.org/10.1016/S0024-3795(99)00208-6},
}

@article {FGKU21,
    AUTHOR = {Feizmohammadi, Ali and Ghosh, Tuhin and Krupchyk, Katya and
              Uhlmann, Gunther},
     TITLE = {Fractional anisotropic {C}alder\'{o}n problem on closed
              {R}iemannian manifolds},
   JOURNAL = {J. Differential Geom.},
  FJOURNAL = {Journal of Differential Geometry},
    VOLUME = {131},
      YEAR = {2025},
    NUMBER = {2},
     PAGES = {--},
      ISSN = {0022-040X},
   MRCLASS = {Prelim},
  MRNUMBER = {4955611},
       DOI = {10.4310/jdg/1757353909},
       URL = {https://doi.org/10.4310/jdg/1757353909},
}

@article {GRSU20,
    AUTHOR = {Ghosh, Tuhin and R\"{u}land, Angkana and Salo, Mikko and Uhlmann,
              Gunther},
     TITLE = {Uniqueness and reconstruction for the fractional {C}alder\'{o}n
              problem with a single measurement},
   JOURNAL = {J. Funct. Anal.},
  FJOURNAL = {Journal of Functional Analysis},
    VOLUME = {279},
      YEAR = {2020},
    NUMBER = {1},
     PAGES = {108505, 42},
      ISSN = {0022-1236},
   MRCLASS = {35R11},
  MRNUMBER = {4083776},
MRREVIEWER = {Vincenzo Ambrosio},
       DOI = {10.1016/j.jfa.2020.108505},
       URL = {https://doi.org/10.1016/j.jfa.2020.108505},
}

@article {GSU20,
    AUTHOR = {Ghosh, Tuhin and Salo, Mikko and Uhlmann, Gunther},
     TITLE = {The {C}alder\'{o}n problem for the fractional {S}chr\"{o}dinger
              equation},
   JOURNAL = {Anal. PDE},
  FJOURNAL = {Analysis \& PDE},
    VOLUME = {13},
      YEAR = {2020},
    NUMBER = {2},
     PAGES = {455--475},
      ISSN = {2157-5045},
   MRCLASS = {35R11 (26A33 35J10 35J70 35R30)},
  MRNUMBER = {4078233},
       DOI = {10.2140/apde.2020.13.455},
       URL = {https://doi.org/10.2140/apde.2020.13.455},
}

@article {HL20,
    AUTHOR = {Harrach, Bastian and Lin, Yi-Hsuan},
     TITLE = {Monotonicity-based inversion of the fractional {S}chr\"{o}dinger
              equation {I}. {P}ositive potentials},
   JOURNAL = {SIAM J. Math. Anal.},
  FJOURNAL = {SIAM Journal on Mathematical Analysis},
    VOLUME = {51},
      YEAR = {2019},
    NUMBER = {4},
     PAGES = {3092--3111},
      ISSN = {0036-1410},
   MRCLASS = {35R30 (35R11)},
  MRNUMBER = {3984302},
       DOI = {10.1137/18M1166298},
       URL = {https://doi.org/10.1137/18M1166298},
}

@article {HL20a,
    AUTHOR = {Harrach, Bastian and Lin, Yi-Hsuan},
     TITLE = {Monotonicity-based inversion of the fractional {S}ch\"{o}dinger
              equation {II}. {G}eneral potentials and stability},
   JOURNAL = {SIAM J. Math. Anal.},
  FJOURNAL = {SIAM Journal on Mathematical Analysis},
    VOLUME = {52},
      YEAR = {2020},
    NUMBER = {1},
     PAGES = {402--436},
      ISSN = {0036-1410},
   MRCLASS = {35R30 (35R11)},
  MRNUMBER = {4057616},
       DOI = {10.1137/19M1251576},
       URL = {https://doi.org/10.1137/19M1251576},
}

@article {KRS21,
    AUTHOR = {Koch, Herbert and R\"{u}land, Angkana and Salo, Mikko},
     TITLE = {On instability mechanisms for inverse problems},
   JOURNAL = {Ars Inven. Anal.},
  FJOURNAL = {Ars Inveniendi Analytica},
      YEAR = {2021},
     PAGES = {Paper No. 7, 93},
   MRCLASS = {35R30 (49N45 65J22 65N21)},
  MRNUMBER = {4462475},
MRREVIEWER = {Yuchan Wang},
}

@article {LLR20,
    AUTHOR = {Lai, Ru-Yu and Lin, Yi-Hsuan and R\"{u}land, Angkana},
     TITLE = {The {C}alder\'{o}n problem for a space-time fractional parabolic
              equation},
   JOURNAL = {SIAM J. Math. Anal.},
  FJOURNAL = {SIAM Journal on Mathematical Analysis},
    VOLUME = {52},
      YEAR = {2020},
    NUMBER = {3},
     PAGES = {2655--2688},
      ISSN = {0036-1410},
   MRCLASS = {35R30 (35R11 35S16)},
  MRNUMBER = {4106974},
       DOI = {10.1137/19M1270288},
       URL = {https://doi.org/10.1137/19M1270288},
}

@article {L21,
    AUTHOR = {Li, Li},
     TITLE = {Determining the magnetic potential in the fractional magnetic
              {C}alder\'{o}n problem},
   JOURNAL = {Comm. Partial Differential Equations},
  FJOURNAL = {Communications in Partial Differential Equations},
    VOLUME = {46},
      YEAR = {2021},
    NUMBER = {6},
     PAGES = {1017--1026},
      ISSN = {0360-5302},
   MRCLASS = {35Q60 (35R11)},
  MRNUMBER = {4267501},
       DOI = {10.1080/03605302.2020.1857406},
       URL = {https://doi.org/10.1080/03605302.2020.1857406},
}

@article {L23,
    AUTHOR = {Li, Li},
     TITLE = {On inverse problems arising in fractional elasticity},
   JOURNAL = {J. Spectr. Theory},
  FJOURNAL = {Journal of Spectral Theory},
    VOLUME = {12},
      YEAR = {2022},
    NUMBER = {4},
     PAGES = {1383--1404},
      ISSN = {1664-039X},
   MRCLASS = {35R30 (35R11 65N21)},
  MRNUMBER = {4590007},
MRREVIEWER = {Jin Wen},
       DOI = {10.4171/jst/428},
       URL = {https://doi.org/10.4171/jst/428},
}

@article {Lin23,
    AUTHOR = {Lin, Yi-Hsuan},
     TITLE = {Determining both leading coefficient and source in a nonlocal
              elliptic equation},
   JOURNAL = {J. Inverse Ill-Posed Probl.},
  FJOURNAL = {Journal of Inverse and Ill-Posed Problems},
    VOLUME = {33},
      YEAR = {2025},
    NUMBER = {2},
     PAGES = {207--216},
      ISSN = {0928-0219},
   MRCLASS = {35R30 (26A33 35J70 35R09)},
  MRNUMBER = {4884058},
       DOI = {10.1515/jiip-2024-0059},
       URL = {https://doi.org/10.1515/jiip-2024-0059},
}

@article {Pen55,
    AUTHOR = {Penrose, R.},
     TITLE = {A generalized inverse for matrices},
   JOURNAL = {Proc. Cambridge Philos. Soc.},
  FJOURNAL = {Proceedings of the Cambridge Philosophical Society},
    VOLUME = {51},
      YEAR = {1955},
     PAGES = {406--413},
      ISSN = {0008-1981},
   MRCLASS = {09.0X},
  MRNUMBER = {69793},
MRREVIEWER = {O. Taussky-Todd},
}

@article {PO99,
    AUTHOR = {Piziak, R. and Odell, P. L.},
     TITLE = {Full {R}ank {F}actorization of {M}atrices},
   JOURNAL = {Math. Mag.},
  FJOURNAL = {Mathematics Magazine},
    VOLUME = {72},
      YEAR = {1999},
    NUMBER = {3},
     PAGES = {193--201},
      ISSN = {0025-570X},
   MRCLASS = {DML},
  MRNUMBER = {1573394},
       URL = {http://www.jstor.org/stable/2690882?origin=pubexport},
}

@article {RS18,
    AUTHOR = {R\"{u}land, Angkana and Salo, Mikko},
     TITLE = {Exponential instability in the fractional {C}alder\'{o}n problem},
   JOURNAL = {Inverse Problems},
  FJOURNAL = {Inverse Problems. An International Journal on the Theory and
              Practice of Inverse Problems, Inverse Methods and Computerized
              Inversion of Data},
    VOLUME = {34},
      YEAR = {2018},
    NUMBER = {4},
     PAGES = {045003, 21},
      ISSN = {0266-5611},
   MRCLASS = {35R30 (35B35 35R11 65J22)},
  MRNUMBER = {3774704},
       DOI = {10.1088/1361-6420/aaac5a},
       URL = {https://doi.org/10.1088/1361-6420/aaac5a},
}

@article {RS20a,
    AUTHOR = {R\"{u}land, Angkana and Salo, Mikko},
     TITLE = {Quantitative approximation properties for the fractional heat
              equation},
   JOURNAL = {Math. Control Relat. Fields},
  FJOURNAL = {Mathematical Control and Related Fields},
    VOLUME = {10},
      YEAR = {2020},
    NUMBER = {1},
     PAGES = {1--26},
      ISSN = {2156-8472},
   MRCLASS = {35R30 (35R11 58J35 93B05)},
  MRNUMBER = {4063625},
       DOI = {10.3934/mcrf.2019027},
       URL = {https://doi.org/10.3934/mcrf.2019027},
}

@article {RS20,
    AUTHOR = {R\"{u}land, Angkana and Salo, Mikko},
     TITLE = {The fractional {C}alder\'{o}n problem: low regularity and
              stability},
   JOURNAL = {Nonlinear Anal.},
  FJOURNAL = {Nonlinear Analysis. Theory, Methods \& Applications. An
              International Multidisciplinary Journal},
    VOLUME = {193},
      YEAR = {2020},
     PAGES = {111529, 56},
      ISSN = {0362-546X},
   MRCLASS = {35R30 (35B35 35B65 35R11 46E35)},
  MRNUMBER = {4062981},
       DOI = {10.1016/j.na.2019.05.010},
       URL = {https://doi.org/10.1016/j.na.2019.05.010},
}

@article {R21,
    AUTHOR = {R{\"u}land, Angkana},
     TITLE = {On single measurement stability for the fractional {C}alder\'{o}n
              problem},
   JOURNAL = {SIAM J. Math. Anal.},
  FJOURNAL = {SIAM Journal on Mathematical Analysis},
    VOLUME = {53},
      YEAR = {2021},
    NUMBER = {5},
     PAGES = {5094--5113},
      ISSN = {0036-1410},
   MRCLASS = {35R30 (35B35)},
  MRNUMBER = {4311473},
MRREVIEWER = {Natesan Barani Balan},
       DOI = {10.1137/20M1381964},
       URL = {https://doi.org/10.1137/20M1381964},
}

@article {Val09,
    AUTHOR = {Valdinoci, Enrico},
     TITLE = {From the long jump random walk to the fractional {L}aplacian},
   JOURNAL = {Bol. Soc. Esp. Mat. Apl. SeMA},
  FJOURNAL = {Bolet\'{\i}n de la Sociedad Espa\~{n}ola de Matem\'{a}tica Aplicada. SeMA},
    VOLUME = {49},
      YEAR = {2009},
     PAGES = {33--44},
      ISSN = {1575-9822},
   MRCLASS = {60G50 (35Q62)},
  MRNUMBER = {2584076},
}

@book{HJ12, 
place={Cambridge}, 
title={Matrix Analysis}, 
publisher={Cambridge University Press}, 
author={Horn, Roger A. and Johnson, Charles R.},
year={1985}}

@article{2,
  author = {Arridge, S.},
  title = {Optical tomography in medical imaging},
  journal = {Inverse Problems},
  volume = {15},
  year = {1999},
  pages = {R41--R93}
}

@article{3,
  author = {Avdonin, S.},
  title = {Control, observation and identification problems for the wave equation on metric graphs},
  journal = {IFAC-PapersOnLine},
  volume = {52},
  number = {2},
  year = {2019},
  pages = {52--57}
}

@article{8,
  author = {Bal, G. and Chou, T.},
  title = {On the reconstruction of diffusions from first-exit time distributions},
  journal = {Inverse Problems},
  volume = {20},
  year = {2004},
  pages = {1053--1065}
}

@book{9,
  author = {Barlow, M.},
  title = {Random walks and heat kernels on graphs},
  publisher = {Cambridge University Press},
  year = {2017}
}

@article{21,
  author = {Borcea, L.},
  title = {Electrical impedance tomography},
  journal = {Inverse Problems},
  volume = {18},
  year = {2002},
  pages = {R99--R136}
}

@article{38,
  author = {Chung, S.-Y. and Berenstein, C.},
  title = {$\omega$-harmonic functions and inverse conductivity problems on networks},
  journal = {SIAM Journal on Applied Mathematics},
  volume = {65},
  year = {2005},
  pages = {1200--1226}
}

@article{39,
  author = {Colin de Verdi\`ere, Y.},
  title = {R\'eseaux \'electriques planaires, I},
  journal = {Commentarii Mathematici Helvetici},
  volume = {69},
  year = {1994},
  pages = {351--374}
}

@article{40,
  author = {Colin de Verdi\`ere, Y. and Gitler, I. and Vertigan, D.},
  title = {R\'eseaux \'electriques planaires, II},
  journal = {Commentarii Mathematici Helvetici},
  volume = {71},
  year = {1996},
  pages = {144--167}
}

@phdthesis{42,
  author = {Conklin, J. E.},
  title = {The discrete Laplacian: applications to random walk and inverse problems on weighted graphs},
  school = {University of Rochester},
  year = {1988}
}

@inproceedings{44,
  author = {Curtis, E. and Edens, T. and Morrow, J.},
  title = {Calculating the resistors in a network},
  booktitle = {Proceedings of the IEEE Engineering in Medicine and Biology Society},
  volume = {11},
  year = {1989},
  pages = {451--452}
}

@article{45,
  author = {Curtis, E. and Ingerman, D. and Morrow, J.},
  title = {Circular planar graphs and resistor networks},
  journal = {Linear Algebra and its Applications},
  volume = {283},
  year = {1998},
  pages = {115--150}
}

@book{46,
  author = {Curtis, E. and Morrow, J.},
  title = {Inverse problems for electrical networks},
  publisher = {World Scientific},
  year = {2000}
}

@article{50,
  author = {de la Pe\~na, V. and Gzyl, H. and McDonald, P.},
  title = {Inverse problems for random walks on trees: network tomography},
  journal = {Statistics \& Probability Letters},
  volume = {78},
  year = {2008},
  pages = {3176--3183}
}

@article{51,
  author = {de la Pe\~na, V. and Gzyl, H. and McDonald, P.},
  title = {Hitting time and inverse problems for Markov chains},
  journal = {Journal of Applied Probability},
  volume = {45},
  year = {2008},
  pages = {640--649}
}

@article{60,
  author = {Grunbaum, F.},
  title = {Diffuse tomography: the isotropic case},
  journal = {Inverse Problems},
  volume = {8},
  year = {1992},
  pages = {409--419}
}

@article{61,
  author = {Grunbaum, F.},
  title = {A nonlinear inverse problem inspired by three-dimensional diffuse tomography},
  journal = {Inverse Problems},
  volume = {17},
  year = {2001},
  pages = {1907--1922}
}

@article{62,
  author = {Grunbaum, F. and Matusevich, L.},
  title = {Explicit inversion formulas for a model in diffuse tomography},
  journal = {Advances in Applied Mathematics},
  volume = {29},
  year = {2002},
  pages = {172--183}
}

@incollection{63,
  author = {Grunbaum, F. and Matusevich, L.},
  title = {A network tomography problem related to the hypercube},
  booktitle = {Contemporary Mathematics},
  volume = {362},
  year = {2004},
  pages = {189--197}
}

@article{64,
  author = {Grunbaum, F. and Matusevich, L.},
  title = {An identification problem for the multiterminal networks},
  journal = {Internet Mathematics},
  volume = {3},
  year = {2006},
  pages = {233--252}
}

@article{71,
  author = {Knox, C. and Moradifam, A.},
  title = {Electrical networks with prescribed current and applications to random walks on graphs},
  journal = {Inverse Problems and Imaging},
  volume = {13},
  number = {2},
  year = {2019},
  pages = {353--375}
}

@article{77,
  author = {Lassas, M. and Salo, M. and Tzou, L.},
  title = {Inverse problems and invisibility cloaking for FEM models and resistor networks},
  journal = {Mathematical Models and Methods in Applied Sciences},
  volume = {25},
  number = {2},
  year = {2015},
  pages = {309--342}
}

@book{79,
  author = {Lawler, G.},
  title = {Random walk and the heat equation},
  publisher = {American Mathematical Society},
  year = {2010}
}

@article{80,
  author = {Lawler, G. and Sylvester, J.},
  title = {Determining resistances from boundary measurements in finite networks},
  journal = {SIAM Journal on Discrete Mathematics},
  volume = {2},
  number = {2},
  year = {1989},
  pages = {231--239}
}

@incollection{81,
  author = {Lov\'asz, L.},
  title = {Random walks on graphs: a survey},
  booktitle = {Combinatorics, Paul Erd\H{o}s is Eighty, Vol. 2},
  year = {1993},
  pages = {353--397}
}

@inproceedings{82,
  author = {Morimura, T. and Osogami, T. and Ide, T.},
  title = {Solving inverse problems of Markov chain with partial observations},
  booktitle = {Advances in Neural Information Processing Systems 26},
  year = {2013}
}

@article{84,
  author = {Patch, S.},
  title = {Recursive recovery of a family of Markov transition probabilities from boundary value data},
  journal = {Journal of Mathematical Physics},
  volume = {36},
  year = {1995},
  pages = {3395--3412}
}

@article{85,
  author = {Ray, W. and Margo, F.},
  title = {The inverse problem in reducible Markov chains},
  journal = {Journal of Applied Probability},
  volume = {13},
  year = {1976},
  pages = {49--56}
}

@article{92,
  author = {Vardi, Y.},
  title = {Network tomography: estimating source-destination traffic intensities from link data},
  journal = {Journal of the American Statistical Association},
  volume = {91},
  year = {1996},
  pages = {365--377}
}
\end{spacing}

\bibliographystyle{alpha}

\end{document}